\newtheorem{theorem}{Theorem}
\newtheorem{lemma}{Lemma}
\newtheorem{corollary}{Corollary}
\newtheorem{proposition}{Proposition}
\newtheorem{remark}{Remark}
\newtheorem{example}{Example}
\begin{document}
	
	\title{The probability of reaching a receding boundary by \\  branching random walk with fading branching and\\  heavy-tailed jump distributions\footnote{The work is supported by the RScF grant 17-11-01173-extension }}
	
	\author{P.I. Tesemnivkov\footnote{MCA, Novosibirsk State University and S.L. Sobolev Institute of Mathematics, Email: tesemnikov.p@gmail.com} \and S.G. Foss\footnote{Heriot-Watt University, Novosibirsk State University  and S.L. Sobolev Institute of Mathematics, Email: sergueiorfoss25@gmail.com}
	}
	
	\maketitle
	\begin{abstract}
	Foss and Zachary (2003) and Foss, Palmowski and Zachary (2005) studied the probability of achieving a receding boundary on a time interval of random length by a random walk with a heavy-tailed jump distribution. They have proposed and developed a new approach that allows to generalise  results of Asmussen (1998) onto the case
of arbitrary stopping times and a wide class of nonlinear boundaries, and to obtain uniform results  over all stopping times.

In this paper, we consider a class of branching random walks with fading branching and obtain results on the tail asymptotics for the maximum of a branching random walk on a time interval of random (possibly unlimited) length, as well as uniform results within a class of bounded random time intervals.
		
		\vskip 0.3cm
		\noindent
		\emph{Keywords:} subexponential and strong subexponential distributions, branching random walk, receding boundary, principle of a single big jump.
		
		\vskip 0.2cm
		\noindent
		Mathematics Subject Classification 2020: Primary: 60G99; Secondary: 60K25, 60E99, 60K37.\\

		\vskip 0.2cm
		\noindent
		{\it Short title:} Branching random walk with heavy tails
		
	\end{abstract}
	
	\section{Introduction} \label{Intro}
	
	Let $ \xi,\xi_0,\xi_1,\xi_2,\ldots$ be a sequence of independent random variables (r.v.'s)  with the   common distribution function $ F $ and zero mean: 
	\begin{align*}
		\mathbb{E} \xi = 0.
	\end{align*}
	 We assume  that  $ F $ has a heavy (right) tail ($ F \in \mathcal{H} $), i.e. 
	\begin{align*}
		\mathbb{E} e^{\lambda \xi} \equiv \int_{-\infty}^{\infty} e^{\lambda t} F(dt) = \infty
	\ \ \mbox{for all} \ \   \lambda > 0 .
	\end{align*}
	Consider a random walk
	\begin{align*}
		S_{0} = 0, \qquad S_{n} = \sum_{k=1}^{n} \xi_{k}, \quad n \ge 1
	\end{align*}
	and a non-negative function 
	 $ g $ that is defined on the set of non-negative integers  $ \mathbb{Z}_{+} $. We call a sequence 
	\begin{align} \label{Intro::shifted_RW}
		S^{g}_{n} = S_{n} - g(n), \quad n \ge 0
	\end{align}
 a ``$g$-shifted'' random walk.  Let 
	\begin{align*}
		M_{n}^{g} = \max_{0 \le k \le g} S_{k}^{g}, \quad n \ge 0
	\end{align*}
	be a sequence of its partial maxima. 
	
	For any non-negative integer-valued r.v. 
	 $ \mu \le \infty $ denote by 
	\begin{align*}
		M_{\mu}^{g} = \max_{0 \le k \le \mu} S_{k}^{g}
	\end{align*}
	the maximum of its partial sums within the time interval  $[0, \mu]$, and introduce a function 
	\begin{align} \label{Intro::old_H}
		H_{\mu}^{g} (x) = \sum_{n=1}^{\infty} \mathbb{P} \left( \mu \ge n \right) \overline{F} (x + g(n)),
	\end{align}
	where $ \overline{F} (x):=1-F(x) $ is the (right) tail of the distribution $ F $.
	
	Foss, Palmowski and Zachary  \cite{FossPalmZach2005} studied conditions for either the lower bound 
	\begin{align} \label{Intro::old_goal1}
		\mathbb{P} \left( M^{g}_{\mu} > x \right) \ge (1+o(1)) H_{\mu}^{g} (x)
	\end{align}
	or the asymptotic equivalence
	\begin{align} \label{Intro::old_goal2}
		\mathbb{P} \left( M^{g}_{\mu} > x \right) = (1+o(1)) H_{\mu}^{g} (x)
	\end{align}
	to hold uniformly within certain sufficiently broad  
	classes of  $ \mu $ and $ g $.
	
	Recall the definitions and some properties of three main classes of heavy-tailed distributions. Distribution
	 $ G $ on $ \mathbb{R} $ has a {\it long  (right) tail} ($ G \in \mathcal{L} $) if $ \overline{G} (x) > 0 $ for all $ x > 0 $ and 
	$\overline{G}(x+h) 
	\sim \overline{G}(x)$,\footnote{ 
 For any two positive functions $f_1(x)$ $f_2(x)$, we write $f_1(x)\sim f_2(x)$ if 	
	$
		\frac{f_1(x)}{f_2(x)} \to 1 \text{ as } x \to \infty.
	$\\
	It follows from $\overline{G}(x+h)\sim \overline{G}(x)$ for {\it some} $h>0$ and from the monotonicity of function $\overline{G}(x)$ that this equivalence holds also  for {\it all} $h>0$ and, moreover, that there exists a positive function $h(x)\uparrow\infty$ such that $\overline{G}(x+h(x))\sim \overline{G}(x)$ (see  \cite[Lemma 2.19]{FossKorZach2013}). In this case we say that the distribution function  $G$ is $h$-{\it insensitive}. A detailed analysis of properties of $h$-insensitivity may be found, e.g., in  \cite{FossRich2010}.}
	for any fixed $ h > 0 $. 
	If $ G $ is long-tailed and if the mean of its restriction to the positive half-line is finite, 
	\begin{align*}
		m_{G^{+}} := \int_{0}^{\infty} \overline{G}(y)dy < \infty,
	\end{align*}
	then (see, e.g., 
	\cite[Lemma 2.26]{FossKorZach2013}) {\it the distribution of the integrated tail }
	\begin{align*}
		\overline{G}_{I} (x) = \min \{ 1, \int_{x}^{\infty} \overline{G}(y) dy \}
	\end{align*}
	is also long-tailed  and 
	 $ \overline{G} (x) = o \left( \overline{G}_{I} (x) \right) $.
	
	Distribution $ G $ on $ \mathbb{R} $ is {\it subexponential} ($ G \in \mathcal{S} $) if $ G \in \mathcal{L} $ and
	\begin{align*}
		\overline{G * G} (x) \sim 2 \overline{G} (x).
	\end{align*}
	Notice that if $ G \in \mathcal{S} $, then $ \overline{G^{*n}}(x) \sim n \overline{G} (x) $, for all $ n \ge 1 $.
	
	Distribution $ G $ on $ \mathbb{R} $ is {\it strong subexponential} ($ G \in \mathcal{S}^{*} $) if $ \overline{G}(x) > 0 $ for all $ x >0 $, $ m_{G^{+}} $ is finite and
	\begin{align*}
		\int_{0}^{x} \overline{G}(x-y) \overline{G} (y) dy \sim 2 m_{G^{+}} \overline{G}(x).
	\end{align*}
	
	It is known (see, e.g. \cite{Klup1988} or \cite[Theorem 3.27]{FossKorZach2013}) that if $ G \in \mathcal{S}^{*} $, then both $ G $ and $ G_{I} $ belong to the class $ \mathcal{S} $. It is also known (see  \cite[Lemma 2.23, Theorem 3.11 and Theorem 3.25]{FossKorZach2013}), that each of the classes of distributions  ($ \mathcal{L}, \mathcal{S} $ and $ \mathcal{S}^{*} $) 
	has the following {\it closure} property:
	if a distribution $ F $ belongs to any of these classes and if  $ \overline{F}(x) \sim \overline{G} (x) $ for another distribution $G$, then $ G $ belongs to the same class.
	
	Let $ \mathcal{F} $ be the family of all non-negative integer-valued r.v.'s. $\sigma$ that do not depend on the future\footnote{By the {\it independence of the future},  $\sigma\in {\cal F}$, we mean the following: for any  $n=1,2,\ldots$ the event $\{\sigma >n\}$ does not depend on the family of r.v.'s  $\{\xi_k\}_{k>n}$.} w.r. to the natural filtration of the sigma-algebras generated by the sequence $ \{ \xi_{n} \}_{n \ge 0} $.
	
	For any $ \sigma \in \mathcal{F} $, let 
	\begin{align*}
		\mathcal{F}_{\sigma} = \{ \mu \in \mathcal{F}: \mu \le \sigma \ \mbox{a.s.} \}.
	\end{align*}
	In particular, for any $ N > 1 $, the family $ \mathcal{F}_{N} $ consists of independent of the future r.v.'s $ \mu $ that are a.s. bounded above by the constant $ N $.
	
	For any constant $ c \in \mathbb{R} $, let  $ \mathcal{G}_{c} $ be the family of all non-negative functions $ g $ such that 
	\begin{align*}
		g(1) \ge c \qquad \text{and} \qquad g(n+1) \ge g(n) + c \ \ \ \mbox{for all} \ \ n \ge 1.
	\end{align*}

	We recall now the main results from  \cite{FossPalmZach2005}.
	\begin{proposition}[Theorem 1 in  \cite{FossPalmZach2005}] \label{Intro::old_th1}~
		\begin{enumerate}
			\item[(i)] Assume that $ F \in \mathcal{L} $. Then, for any integer $ N > 1 $, the inequality  \eqref{Intro::old_goal1} holds uniformly in all $ \mu \in \mathcal{F}_{N} $ and $ g \in \mathcal{G}_{0} $.
			\item[(ii)] Assume that  $ F \in \mathcal{S} $. Then, for any integer $ N > 1 $, the equivalence   \eqref{Intro::old_goal2} holds uniformly in all $ \mu \in \mathcal{F}_{N} $ and $ g \in \mathcal{G}_{0} $.
		\end{enumerate}
	\end{proposition}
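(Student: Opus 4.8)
\noindent\emph{Proof strategy.} I would build everything on the first-passage decomposition. Put $\tau=\tau_x:=\inf\{n\ge 1:\ S_n>x+g(n)\}$ (with $\inf\emptyset:=+\infty$), so that $\{M_\mu^g>x\}=\{\tau\le\mu\}$. The event $\{\tau\ge n\}=\{S_k\le x+g(k)\ \text{for all}\ k\le n-1\}$ is determined by $\xi_1,\dots,\xi_{n-1}$, while $\{\mu\ge n\}$ is, by definition of $\mathcal{F}$, independent of $(\xi_n,\xi_{n+1},\dots)$. Conditioning on $\xi_1,\dots,\xi_{n-1}$ and on the extra randomisation used in the construction of $\mu$, and using that $\xi_n$ is then an independent $F$-distributed increment, one gets the exact identity
\begin{align*}
\mathbb{P}\bigl(M_\mu^g>x\bigr)=\sum_{n=1}^{N}\mathbb{P}(\tau=n,\ \mu\ge n)=\sum_{n=1}^{N}\mathbb{E}\Bigl[\mathbf{1}(\tau\ge n,\ \mu\ge n)\,\overline{F}\bigl(x+g(n)-S_{n-1}\bigr)\Bigr].
\end{align*}
Both parts then amount to comparing the $n$-th summand with the $n$-th term $\mathbb{P}(\mu\ge n)\,\overline{F}(x+g(n))$ of $H_\mu^g(x)$.

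\smallskip
\noindent For part (i) I would fix a large constant $T>0$, keep only the contribution of $\{S_{n-1}>-T\}$, and use monotonicity of $\overline{F}$ together with long-tailedness: since $\sup_{y\ge x}\bigl(1-\overline{F}(y+T)/\overline{F}(y)\bigr)\to0$ as $x\to\infty$ and $g(n)\ge0$, we have $\overline{F}(x+g(n)+T)\ge(1-o(1))\overline{F}(x+g(n))$ uniformly in $g\in\mathcal{G}_0$, whence
\begin{align*}
\mathbb{P}\bigl(M_\mu^g>x\bigr)\ \ge\ (1-o(1))\sum_{n=1}^{N}\overline{F}(x+g(n))\,\mathbb{P}\bigl(\tau\ge n,\ \mu\ge n,\ S_{n-1}>-T\bigr).
\end{align*}
It remains to show that the defect $\sum_{n}\overline{F}(x+g(n))\bigl[\mathbb{P}(\mu\ge n)-\mathbb{P}(\tau\ge n,\ \mu\ge n,\ S_{n-1}>-T)\bigr]$ is $o(1)\cdot H_\mu^g(x)$ uniformly, upon letting first $x\to\infty$ and then $T\to\infty$. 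The relevant inputs are: $\mathbb{P}(\tau\le N)\le\mathbb{P}(\max_{0\le k\le N}S_k>x)\to0$ (uniformly in $g\ge0$); the a.s.\ finiteness of $\min_{0\le k\le N}S_k$, so that $\mathbb{P}(S_{n-1}\le -T)\to0$ as $T\to\infty$; and the monotonicity $g(n+1)\ge g(n)$, which lets one bound each ``lost'' term by earlier, surviving terms of $H_\mu^g$. I expect this uniform estimate of the defect to be the main obstacle: a crude bound loses a factor that is harmless only when $\mathbb{P}(\mu\ge1)$ and the $g(n)$ stay bounded, and it must be complemented by the remark that precisely when $H_\mu^g(x)\ll\overline{F}(x)$ (i.e.\ $\mathbb{P}(\mu\ge1)$ small or $g$ growing fast) the lost terms are of smaller order than $H_\mu^g(x)$ itself.

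\smallskip
\noindent For part (ii), in view of (i) only the upper bound $\mathbb{P}(M_\mu^g>x)\le(1+o(1))H_\mu^g(x)$ is needed, and here I would use $F\in\mathcal{S}$. Fix $h(x)\uparrow\infty$ with $h(x)=o(x)$ and $\overline{F}(x\pm h(x))\sim\overline{F}(x)$ (such an $h$ exists since $F\in\mathcal{S}\subseteq\mathcal{L}$), and split the $n$-th summand of the identity above on $\{S_{n-1}\le h(x)\}$ and $\{S_{n-1}>h(x)\}$. On $\{S_{n-1}\le h(x)\}$, $h$-insensitivity gives $\overline{F}(x+g(n)-S_{n-1})\le(1+o(1))\overline{F}(x+g(n))$ uniformly in $g(n)\ge0$, so this part contributes at most $(1+o(1))\sum_n\overline{F}(x+g(n))\mathbb{P}(\mu\ge n)=(1+o(1))H_\mu^g(x)$. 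The sum over $n$ of the $\{S_{n-1}>h(x)\}$ parts is at most $\mathbb{P}\bigl(\tau\le N,\ S_{\tau-1}>h(x)\bigr)$, which forces one increment of size $\gtrsim h(x)$ (to climb to level $h(x)$ without yet crossing the boundary) and a further increment crossing the boundary near $x$; this two-big-jumps event is $o(\overline{F}(x))$ by the subexponential smoothing $\overline{F\ast F}(x)\sim2\overline{F}(x)$ together with the bound $\mathbb{P}(S_k>x)\le C_k\overline{F}(x)$, and this is the only place where $\mathcal{S}$ (rather than merely $\mathcal{L}$) is used. As in part (i), the genuinely delicate point is the uniform passage from $o(\overline{F}(x))$ to $o(H_\mu^g(x))$ when $H_\mu^g(x)$ is much smaller than $\overline{F}(x)$; alternatively one can induct on $N$, peeling off $\xi_1$ (so that $M_\mu^g=\max\{0,\ \xi_1+\widetilde M\}$ on $\{\mu\ge1\}$, with $\widetilde M$ the analogous functional for the shifted walk $\widetilde S_j=\xi_2+\dots+\xi_{j+1}$, $\widetilde\mu=(\mu-1)^+\in\mathcal{F}_{N-1}$ and $\widetilde g(\cdot)=g(\cdot+1)\in\mathcal{G}_0$) and combining the $(N-1)$-case with the estimate $\int_{|s|\le h(x)}\overline{F}(y-s)\,F(ds)\sim\overline{F}(y)$.
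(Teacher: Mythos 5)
First, a point of reference: the paper does not prove this proposition at all --- it is imported verbatim as Theorem 1 of \cite{FossPalmZach2005} --- so the only internal comparison is with the proof of Theorem \ref{MR::th3}, the BRW generalisation, which is built on exactly your scheme: decomposition over the first crossing time $\tau^g(x)$, a split according to whether the position at time $n-1$ lies in $[-h,h]$, and $h$-insensitivity for the main term. Your starting identity $\mathbb{P}(M_\mu^g>x)=\sum_{n=1}^{N}\mathbb{E}\bigl[\mathbf{1}(\tau\ge n,\mu\ge n)\,\overline{F}(x+g(n)-S_{n-1})\bigr]$ is correct (granted the standard reading of ``independent of the future'', namely that $\sigma(\mathbb{I}(\mu\ge n),\xi_1,\dots,\xi_{n-1})$ is independent of $\xi_n,\xi_{n+1},\dots$), and the treatment of the main term in both parts is sound.

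The genuine gap is the one you flag twice and do not close: converting additive errors of size $o(1)\cdot\overline{F}(x+g(n))$, or $o(\overline{F}(x))$, into relative errors $o\bigl(H_\mu^g(x)\bigr)$ uniformly in $\mu\in\mathcal{F}_N$ and $g\in\mathcal{G}_0$. Your closing ``remark'' for part (i) is not an argument, and the crude route really does produce a spurious factor $1/\mathbb{P}(\mu\ge1)$. The missing observation is structural: every event you discard (in (i) the event $E_n=\{\tau<n\}\cup\{S_{n-1}\le-T\}$, in (ii) the event $E_n=\{\tau=n,\ S_{n-1}>h(x)\}$) is measurable with respect to the increments alone, whereas $\{\mu\ge 1\}=\{\mu>0\}$ is, by the very definition of $\mathcal{F}$, independent of the \emph{whole} sequence $(\xi_k)_{k\ge1}$. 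Hence $\mathbb{P}(\mu\ge n,\,E_n)\le\mathbb{P}(\mu\ge1,\,E_n)=\mathbb{P}(\mu\ge1)\,\mathbb{P}(E_n)$, and since $g$ is non-decreasing, $\mathbb{P}(\mu\ge1)\overline{F}(x+g(n))\le\mathbb{P}(\mu\ge1)\overline{F}(x+g(1))\le H_\mu^g(x)$. This makes the defect in (i) at most $H_\mu^g(x)\sum_{n\le N}\mathbb{P}(E_n)$, which vanishes uniformly upon letting $x\to\infty$ and then $T\to\infty$, and part (i) closes. In part (ii) the same device reduces matters to showing $\sum_{n\le N}\mathbb{P}\bigl(\tau=n,\ S_{n-1}>h(x)\bigr)=o\bigl(\overline{F}(x+g(1))\bigr)$ \emph{uniformly in $g\in\mathcal{G}_0$}; note that this is strictly stronger than the $o(\overline{F}(x))$ you assert, since $\overline{F}(x+g(1))$ can be of much smaller order than $\overline{F}(x)$ when $g(1)$ is large, so the two-big-jump/convolution estimate must be carried out at level $x+g(n)$ with the uniformity in the shift tracked throughout --- this is precisely the technical core of the argument in \cite{FossPalmZach2005}. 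Your alternative inductive route (peeling off $\xi_1$) is viable and closer in spirit to that paper, but as written it remains a plan rather than a proof.
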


	\begin{proposition}[Theorem 2 in \cite{FossPalmZach2005}] \label{Intro::old_th2}~
		\begin{enumerate}
			\item[(i)] Assume that $ F \in \mathcal{L} $. Then, for any $ c > 0 $, the inequality  \eqref{Intro::old_goal1} holds uniformly in all   $ \mu \in \mathcal{F} $ and $ g \in \mathcal{G}_{c} $.
			\item[(ii)] Assume that $ F \in \mathcal{S}^{*}$. Then, for any $ c > 0 $, the equivalence   \eqref{Intro::old_goal2} holds uniformly  in all  $ \mu \in \mathcal{F} $ and $ g \in \mathcal{G}_{c} $.
		\end{enumerate}
	\end{proposition}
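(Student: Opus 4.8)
The plan is to prove both parts by the principle of a single big jump: the leading contribution to $\{M^{g}_{\mu}>x\}$ comes from the event that one increment $\xi_{n}$, at some time $n\le\mu$, is of order $x+g(n)$, while $S^{g}$ behaves ``typically'' before and after time $n$. The role played in the earlier results by a fixed horizon is here taken over by the linear lower bound $g(n)\ge nc$ with $c>0$: it gives $S^{g}$ a uniform negative drift, so that $\sup_{n}S^{g}_{n}<\infty$ a.s., $\sup_{g\in\mathcal{G}_{c}}\Sigma^{g}(x)\to0$ where $\Sigma^{g}(x):=\sum_{n\ge1}\overline{F}(x+g(n))$, and $\rho(x):=\mathbb{P}(\sup_{n}S^{g}_{n}>x)\to0$; moreover, under $F\in\mathcal{S}^{*}$ (so that $F_{I}\in\mathcal{S}$) one has $\rho(x)\sim\Sigma^{g}(x)$ (a classical Veraverbeke-type fact; see, e.g., \cite{FossKorZach2013}), the instance $\mu\equiv\infty$ of \eqref{Intro::old_goal2}, which will be used for (ii). Fix throughout an increasing $h=h(x)\to\infty$ with $h(x)=o(x)$ for which $F$ is $h$-insensitive, chosen moreover to outgrow the typical fluctuation scale of $S_{n}$ over the range of $n$ for which $\overline{F}(x+g(n))$ is not negligible (that range is of size comparable to the insensitivity scale, which tends to infinity, so such an $h$ exists). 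We may also assume $\mathbb{P}(\mu\ge1)$ is bounded below, conditioning on $\{\mu\ge1\}$ otherwise.

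\emph{Lower bound (i).} Decompose along the first passage time $T=\inf\{n\ge1:S^{g}_{n}>x\}$, so $\mathbb{P}(M^{g}_{\mu}>x)=\sum_{n\ge1}\mathbb{P}(T=n,\mu\ge n)$, and use
\[
\{T=n,\mu\ge n\}\ \supseteq\ \{T\ge n\}\cap\{\mu\ge n\}\cap\{S_{n-1}>-h(x)\}\cap\{\xi_{n}>x+g(n)+h(x)\},
\]
on which $S^{g}_{n}>x$ and, as $S^{g}$ has not yet crossed $x$, $T=n$. Since $\{\mu\ge n\}$ is independent of $\xi_{n}$ while the other three events are measurable with respect to $\xi_{1},\dots,\xi_{n-1}$ and the auxiliary randomness of $\mu$, and since $h$-insensitivity is uniform in $n$ and $g$ (because $x+g(n)\ge x\to\infty$), summation gives $\mathbb{P}(M^{g}_{\mu}>x)\ge(1-o(1))\sum_{n}\mathbb{P}(T\ge n,\mu\ge n,S_{n-1}>-h(x))\,\overline{F}(x+g(n))$. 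Write $\mathbb{P}(T\ge n,\mu\ge n,S_{n-1}>-h)\ge\mathbb{P}(\mu\ge n)-\mathbb{P}(\mu\ge n,T<n)-\mathbb{P}(S_{n-1}\le-h)$. The first subtracted term is controlled by the key observation that $\{\mu\ge n,T<n\}\subseteq\{M^{g}_{\mu}>x\}$, whence $\sum_{n}\mathbb{P}(\mu\ge n,T<n)\overline{F}(x+g(n))\le\mathbb{P}(M^{g}_{\mu}>x)\,\Sigma^{g}(x)$; since $\Sigma^{g}(x)\to0$, this is a multiplicative perturbation of $\mathbb{P}(M^{g}_{\mu}>x)$ itself and is moved to the left-hand side after rearranging, not an additive loss. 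The second subtracted term contributes $\sum_{n}\mathbb{P}(\mu\ge n,S_{n-1}\le-h(x))\overline{F}(x+g(n))$, which the choice of $h$ makes $o(H^{g}_{\mu}(x))$ uniformly: $\mathbb{P}(S_{n-1}\le-h(x))$ is small over the effective range of $n$, $\overline{F}(x+g(n))$ is already negligible beyond it, and the residue is compared termwise with the matching part of $H^{g}_{\mu}(x)$. Thus $\mathbb{P}(M^{g}_{\mu}>x)\ge(1-o(1))H^{g}_{\mu}(x)$, and only $F\in\mathcal{L}$ is used.

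\emph{Upper bound (ii).} Start from the exact identity $\mathbb{P}(M^{g}_{\mu}>x)=\sum_{n}\mathbb{E}\bigl[\mathbb{I}(T\ge n,\mu\ge n)\,\overline{F}(x+g(n)-S_{n-1})\bigr]$, obtained by conditioning on $\xi_{1},\dots,\xi_{n-1}$. On $\{T\ge n,\ S_{n-1}\le h(x)\}$, $h$-insensitivity gives $\overline{F}(x+g(n)-S_{n-1})\le(1+o(1))\overline{F}(x+g(n))$, so this part sums to at most $(1+o(1))\sum_{n}\mathbb{P}(\mu\ge n)\overline{F}(x+g(n))=(1+o(1))H^{g}_{\mu}(x)$ --- the ``single large increment on a typical walk'' term. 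In the complementary part $\{T\ge n,\ S_{n-1}>h(x)\}$ the walk $S$ sits high without $S^{g}$ having crossed $x$; here one separates (a) the event that no increment among $\xi_{1},\dots,\xi_{n}$ exceeds an intermediate level $\ell=\ell(x)$, whose probability, after replacing the increments by $\min(\xi_{k},\ell)$, is super-polynomially small by a Cram\'er-type estimate (the truncated walk keeps its negative drift and now has increments bounded above), and (b) the event that an earlier large increment raised $S$ but left $S^{g}$ below $x$, with the crossing occurring later. Estimating (b) by peeling back through the levels below $x$ and using the uniform negative drift yields a telescoping bound of total order $o(\Sigma^{g}(x))$; this peeling estimate is exactly the mechanism of the Veraverbeke asymptotics and is where $F_{I}\in\mathcal{S}$, equivalently $F\in\mathcal{S}^{*}$, is indispensable. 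Because $\Sigma^{g}(x)\to0$, and because the bad configurations enter $\{M^{g}_{\mu}>x\}$ only together with the event that $\mu$ is at least the crossing time, these corrections are absorbed, giving $\mathbb{P}(M^{g}_{\mu}>x)\le(1+o(1))H^{g}_{\mu}(x)$.

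The step I expect to be the main obstacle is uniformity over the \emph{whole} family $\mathcal{F}$, above all over unbounded $\mu$: because of the linear growth of $g$, the mass of $H^{g}_{\mu}(x)$ and of $\{M^{g}_{\mu}>x\}$ can lie at times $n$ of order $x$, so there is no reduction to a bounded horizon and hence no direct appeal to Proposition~\ref{Intro::old_th1}; every estimate must keep the weight $\mathbb{P}(\mu\ge n)$ attached to each time $n$, and the ``already-crossed'' correction (lower bound) and ``climbed high via moderate increments'' correction (upper bound) have to be handled as multiplicative perturbations of $\mathbb{P}(M^{g}_{\mu}>x)$ itself, which is legitimate only because $\Sigma^{g}(x)\to0$. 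That last correction is precisely where strong subexponentiality, not mere long-tailedness, is required, which is the reason (ii) needs $F\in\mathcal{S}^{*}$ whereas (i) needs only $F\in\mathcal{L}$.
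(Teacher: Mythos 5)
First, a point of order: the paper you are annotating does not prove this statement --- Proposition~\ref{Intro::old_th2} is imported verbatim from \cite{FossPalmZach2005} --- so your proposal must be measured against the proof given there. Your skeleton is the right one and partly matches it: the decomposition at the first passage time $T=\inf\{n:S^g_n>x\}$, the single\-/big\-/jump lower bound, and especially the multiplicative absorption of the ``already crossed'' term via $\sum_n\mathbb{P}(\mu\ge n,\,T<n)\,\overline{F}(x+g(n))\le\mathbb{P}(M^g_\mu>x)\sum_n\overline{F}(x+g(n))$ together with $\sup_{g\in\mathcal{G}_c}\sum_n\overline{F}(x+g(n))\le c^{-1}\overline{F}_I(x)\to0$ are genuine and correct ideas, and they are indeed where the hypothesis $c>0$ enters. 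But the two steps you treat as routine are precisely where the content of the theorem lies, and as written they do not close. In the lower bound, your second correction term $\sum_n\mathbb{P}(\mu\ge n,\,S_{n-1}\le-h(x))\,\overline{F}(x+g(n))$ is handled by dropping the factor $\mathbb{P}(\mu\ge n)$, which turns it into (at best) $o(1)\cdot\sum_n\overline{F}(x+g(n))=o(\overline{F}_I(x))$; this is \emph{not} $o(H^g_\mu(x))$, since $H^g_\mu(x)$ can be of the much smaller order $\overline{F}(x)$ (e.g.\ for bounded or light-tailed $\mu$), and the ratio $\overline{F}_I(x)/\overline{F}(x)$ is unbounded. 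Keeping the factor $\mathbb{P}(\mu\ge n)$ is not possible termwise either: $\mu$ is adapted, so $\mathbb{P}(S_{n-1}\le-h(x)\mid\mu\ge n)$ need not be small; and since $\mathbb{E}\xi=0$ the walk oscillates, so $\mathbb{P}(S_{n-1}\le-h(x))$ is bounded away from $0$ once $n$ exceeds the fluctuation scale set by $h(x)$, while $h(x)$ cannot be taken large at will because it is capped by the insensitivity scale of $F$ (for $\overline{F}(y)=e^{-y^{2/3}}$ one is forced to $h(x)=o(x^{1/3})$). Times $n$ beyond that fluctuation scale can carry a non-negligible, for suitable $(\mu,x)$ even a constant, fraction of $H^g_\mu(x)$. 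The device that actually works is the strong-law event $\{S_{n-1}\ge-\varepsilon n-K\}$, whose complement is small uniformly in $n$; the price is that one must then show $\sum_n\mathbb{P}(\mu\ge n)\overline{F}(x+g(n)+\varepsilon n)\ge(1-\delta)H^g_\mu(x)$ using $\varepsilon n\le(\varepsilon/c)g(n)$ --- a non-termwise comparison (long-tailedness does not give $\overline{F}((1+\varepsilon)y)\sim\overline{F}(y)$) which is the technical heart of the lower bound in \cite{FossPalmZach2005} and is absent from your sketch.

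The upper bound has the same structural problem in a more acute form. The corrections you describe (the Cram\'er estimate for the truncated walk, and the ``peeling'' through intermediate levels) are claimed to total $o(\Sigma^g(x))=o(\overline{F}_I(x))$; again this is not $o(H^g_\mu(x))$ unless $\mu$ is essentially unbounded. You correctly identify that every error must retain the weight $\mathbb{P}(\mu\ge n)$ attached to the crossing time, and you correctly identify this as the main obstacle --- but identifying the obstacle is not the same as overcoming it. The mechanism that attaches the weight (a decomposition of the ``two or more moderate-to-large increments'' event that keeps the stopping time tied to the first crossing and is then estimated via the defining integral $\int_0^x\overline{F}(x-y)\overline{F}(y)\,dy\sim 2m_{F^+}\overline{F}(x)$ of the class $\mathcal{S}^*$) is the second hard part of \cite{FossPalmZach2005} and is not supplied here. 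In summary: the plan is sound for the two extreme regimes (bounded $\mu$, where Proposition~\ref{Intro::old_th1} applies, and $\mu\equiv\infty$, where the Veraverbeke asymptotics apply), but the uniform statement over all of $\mathcal{F}$ --- which is the whole point of the theorem --- is not yet proved by this argument.
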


	In particular, Propositions \ref{Intro::old_th1} and \ref{Intro::old_th2} generalise a number of earlier  known results on the  asymptotics of the tail distribution $ \mathbb{P} \left( M_{\mu}^{g} > x \right) $. 
	
	Let $ g(n) = \widehat{c} (n) := cn $ where $c>0$ is an arbitrary constant. Consider a random walk 
	\begin{align*}
		S_{n}^{\widehat{c}} = \sum_{i=1}^{n} (\xi_{i} - c)
	\end{align*}
	with negative drift $ -c $.
	Proposition \ref{Intro::old_th2} implies that if  $ F \in \mathcal{S}^{*} $ and $ \mathbb{E} \mu < \infty $, then, for any $ c > 0 $,
	\begin{align}  \label{Intro::FZ_res}
		\mathbb{P} \left( M_{\mu}^{\widehat{c}} > x \right) \sim H_{\mu}^{\widehat{c}} (x) \sim \mathbb{E} \mu \overline{F} (x),
	\end{align}
	where $ H_{\mu}^{\widehat{c}} (x) = \sum_{n \ge 1} \mathbb{P} \left( \mu \ge n \right) \overline{F} (x + cn) $.
	The equivalences \eqref{Intro::FZ_res} were obtained by Asmussen \cite{Asm1998} in the case where  $ \mu = \tau = \inf \{ n \ge 1: S_{n}^{\widehat{c}} < 0 \} $ and then extended onto arbitrary stopping times $ \mu $ in \cite{FossZach2003}. It was also shown in \cite{FossZach2003} that the condition  $ F \in \mathcal{S}^{*} $ is necessary for the equivalence  \eqref{Intro::FZ_res} to hold in  the case $ \mu = \tau $.
	
	On the other hand, if $ F \in \mathcal{S}^{*} $ and  $ \mu = \infty $ a.s., then Proposition  \ref{Intro::old_th2} implies that, for any $ c > 0 $,
	\begin{align} \label{Intro::Veraver_res}
		\mathbb{P} \left( \sup_{n \ge 0} S_{n}^{\widehat{c}} > x \right) \sim H_{\infty}^{\widehat{c}} (x) \sim \frac{1}{c} \overline{F}_{I} (x).
	\end{align}
	Here the equivalence  \eqref{Intro::Veraver_res} is a particular case of the result by Veraverbeke  \cite{Verav1977}. It is known that  \eqref{Intro::Veraver_res} holds if and only if $ \overline{F}_{I} \in \mathcal{S} $ (see  \cite{Kor2002}). Natural analogues of the results from \cite{FossPalmZach2005} in continuous time have been obtained in  \cite{FoKoPaRo2017}.
	
	We turn now to a brief description of a {\it branching random walk} (BRW).  Let $ Z_{n} $ be a branching process in changing environment (this means that the distribution of the number of offspring in this process may depend on a generation), and let $ \Pi $ be the family of all finite paths in its genealogical tree that start from its root.  Then the corresponding branching random walk $ \{S(\pi)\}_{\pi\in\Pi} $ 
	with independent increments having zero-mean distribution $ F $ is defined by 
	\begin{align*}
		S(\pi) = \sum_{e \in \pi} \xi_{e},
	\end{align*}
	where r.v.'s $  \xi_{i,k} $ are independent and have the common  distribution $ F $, and $ \xi_{e} $ is a r.v. $ \xi_{k(e), i(e)} $, where $ k(e) $ is the number of the generation the edge $e$ ends in and  $ i(e) $ is the number of the vertex in generation $ k(e) $  the edge $ e $ ends at.  We also assume  that all r.v.'s $ \xi_{i, k} $ do not depend on $ Z_{n} $.
	
	For a non-negative function $ g $ on $\mathbb{Z}_{+}$, let
	\begin{align*}
		S^{g}(\pi) = S(\pi) - g (|\pi|)
	\end{align*}
	where $ |\pi| $ is the length of (i.e. the number of the edges in) the path $ \pi $.  Let 
	\begin{align*}
		R_{n}^{g} = \max_{\pi: |\pi| \le n} S^{g} (\pi)
	\end{align*}
	be the rightmost point of the  {\it $ g $-shifted BRW} in the first $ n $ generations.
	
	For any $ x $ and for any non-negative integer-valued r.v. $ \mu $, let 
	\begin{align} \label{Intro::new_H}
		H_{\mu}^{g} (x; \widehat{\mathcal{P}}) = \sum_{n=1}^{\infty} \mathbb{E} \left[ Z_{n} \mathbb{I} (\mu \ge n) \right] \overline{F}(x + g(n)),
	\end{align}
	where $ \widehat{\mathcal{P}} = \left( \mathcal{P}_{0}, \mathcal{P}_{1}, \mathcal{P}_{2}, \ldots \right) $ and  $ \mathcal{P}_{k} $ is the distribution of the number of offspring in the $k$'th generation of the branching process  $ Z_{n} $. Notice that if there is no branching, then the functions  \eqref{Intro::old_H} and \eqref{Intro::new_H} coincide:
	\begin{align*}
		H_{\mu}^{g} (x; \widehat{\delta}) = H_{\mu}^{g}(x),
	\end{align*}
	where $ \widehat{\delta} = (\delta_{1}, \delta_{1}, \delta_{1}, \ldots) $ is the sequence of degenerative  at point 1 distributions.
	
	In this paper, we study conditions that imply the ``individual'' asymptotic equivalence  
	\begin{align} \label{Intro::goal_non_un2}
		\mathbb{P} \left( R_{\mu}^{g} > x \right) \sim H_{\mu}^{g} (x; \widehat{\mathcal{P}})
	\end{align}
	for given $\mu$ and $g$, and also uniform generalisations of Proposition 1 
	\begin{align} \label{Intro::goal_un1}
		\mathbb{P} \left( R_{\mu}^{g} > x \right) \ge (1 + o(1)) H_{\mu}^{g} (x; \widehat{\mathcal{P}})
	\end{align}
	and
	\begin{align} \label{Intro::goal_un2}
		\mathbb{P} \left( R_{\mu}^{g} > x \right) = (1 + o(1)) H_{\mu}^{g} (x; \widehat{\mathcal{P}})
	\end{align}
	onto certain classes of 
	$ \mu $ and $ g $. Note that if $ \widehat{\mathcal{P}} = \widehat{\delta} $, then $ R_{\mu}^{g} $ conicides with $ M_{\mu}^{g} $ from \cite{FossPalmZach2005} and, therefore,  \eqref{Intro::goal_non_un2} is equivalent (depending on the type of $ \mu $) either to  \eqref{Intro::FZ_res} or to \eqref{Intro::Veraver_res}, while \eqref{Intro::goal_un1} and \eqref{Intro::goal_un2} are equivalent to \eqref{Intro::old_goal1} and \eqref{Intro::old_goal2}, correspondingly.

	The papers \cite{Durr1983} and \cite{Gantert2000} considered supercritical time-homogeneous BRW's with heavy-tailed jump distributions (that were regularly varying in  \cite{Durr1983} and semi-exponential in  \cite{Gantert2000}) and studied the asymptotic behaviour of the rightmost point in the 
	$n$'th generation, as $n$ grows. In both cases it was shown that the growth rate is superlinear and the 
	corresponding limit theorems for normalised sequences have been proven.
	
	In Appendix A2 below, we show that the the supremum of the rightmost points in all generations of a supercritical BRW is infinite not only in this cases, but for any heavy-tailed jump distribution. 
	For the supremum to be finite, we introduce a condition of branching fading (see condition 
		 \eqref{BPVE::fad_cond} below).

Our paper includes four Sections and Appendix. In Section \ref{MMR} we present a detailed description of a branching process in varying environment and of the corresponding branching random walk, and formulate our main results. In Section \ref{Examples}, we provide corollaries of our main results and two examples. Section 4 includes the proofs of the main results. Appendix contains the proofs of auxiliary results  \ref{BPVE::nu_moments} and  \ref{BPVE::Z_moments} (that are formulated below) and also the proof of the fact that the supremum of a supercritical BRW with heavy-tailed jump distributions is necessarily infinite. 
	
	\section{Model Description and Main Results} \label{MMR}
	
	In this Section we present a complete formal description of the model and formulate our main results. 
	
	\subsection{Branching processes in varying environment} \label{BPVE}
	
 {\it A branching process in varying environment} $ \{ Z_{n} \}_{n \ge 0} $ (in other words, a time-inhomogeneous Galton-Watson process) is a random process in discrete time defined by recursion 
	\begin{align*}
		Z_{0} = 1, \qquad Z_{n+1} = \sum_{j=1}^{Z_{n}} \zeta_{n, j}, \ n \ge 0,
	\end{align*}
	where, for any  $ n \ge 0 $, sequence 
	$ \widehat{\zeta}_{n} := \{ \zeta_{n,j} \}_{j \ge 0} $ is a sequence of independent and identically distributed (i.i.d.) r.v.'s with common distribution $ \mathcal{P}_{n} $ and the 
	sequences $ \widehat{\zeta}_{1}, \widehat{\zeta}_{2}, \ldots $ are mutually independent.
	
	We assume that the process $ \{ Z_{n} \}_{n \ge 1} $ dies out with probability $ 0 $:
	\begin{align} \label{BPVE::non_ext}
		\zeta_{n,1} \ge 1 \text{ a.s. for all } n \ge 0.
	\end{align}
	We assume also that
	\begin{align} \label{BPVE::finite_mean}
		\mathbb{E} \zeta_{n,1} < \infty \qquad \text{ for all } n \ge 0.
	\end{align}

We restrict our consideration by the class of so-called {\it processes with fast fading}, namely, 
the processes satisfying the following condition: 
	\begin{align} \label{BPVE::fad_cond}
		L:= \prod_{n=0}^{\infty} \mathbb{E} \zeta_{n,1} < \infty.
	\end{align}
	
	By the Beppo Levy theorem, conditions 
	 \eqref{BPVE::non_ext} and \eqref{BPVE::fad_cond} imply the existence of an integrated r.v. 
	  $ Z $ such that $ Z_{n} \to Z $ a.s. and in $ L_{1} (\Omega) $, and, in particular,
	 $ \mathbb{E} Z_{n} \to L $ as $ n \to \infty $. Moreover, under the condition 
	\eqref{BPVE::fad_cond} {\it the fading time}
	\begin{align*}
		\nu := \inf \{ n \ge 1: Z_n = Z_{n+1} = Z_{n+2} = \ldots = Z \}
	\end{align*}
	is finite a.s.
	
	Note that, under the condition \eqref{BPVE::finite_mean}, a sequence $ Z_{n} / L_{n} $, where $ L_{n} = \prod_{k=0}^{n} \mathbb{E} \zeta_{k,1} $, forms a non-negative martingale and, therefore,
	converges with probability 1 to some (possibly, infinite) limit   $ Z $.
	
	Let
	\begin{align*}
		q_{n} = \mathbb{P} (\zeta_{n,1} \neq 1).
	\end{align*}
	It is easy to see that condition \eqref{BPVE::fad_cond} implies convergence 
	\begin{align} \label{BPVE::q_n_conv}
	\sum_{n=0}^{\infty} q_{n} < \infty
	\end{align}
	and, therefore, convergence $ q_{n} \to 0 $ as $ n \to \infty $.
	
	Note that it was shown in \cite[
	Theorem 1.4]{KersVat2017} that if a branching process $ Z_{n} $ satisfies condition  \eqref{BPVE::finite_mean} only, then condition \eqref{BPVE::q_n_conv} is equivalent to 
	\begin{align*}
		\mathbb{P} \left( Z = 0 \right) + \mathbb{P} \left( Z = \infty \right) < 1.
	\end{align*}
	
	We provide now necessary and sufficient conditions for finiteness of power and exponential moments of the fading time  $ \nu $. Clearly, the rate of convergence of $ q_{n} $ to $ 0 $ determines its existence.
	\begin{proposition} \label{BPVE::nu_moments}
	Assume that conditions
		\eqref{BPVE::non_ext} and \eqref{BPVE::fad_cond} hold,  and let $ f: \mathbb{R}^{+} \to \mathbb{R}^{+} $ be an arbitrary non-decreasing function. Then
		\begin{align*}
		\mathbb{E} f(\nu) < \infty \text{ if and only if } \sum_{n=0}^{\infty} f(n+1) q_n < \infty.
		\end{align*}
		In particular, for any $ s, \lambda > 0 $,
		\begin{enumerate}
			\item $ \mathbb{E}  \nu^{s} < \infty $ if and only if $ \sum_{n=0}^{\infty} n^s q_n < \infty $;
			\item $ \mathbb{E} e^{\lambda \nu} < \infty $ if and only if $ \sum_{n=0}^{\infty} e^{\lambda n} q_n < \infty $.
		\end{enumerate}
	\end{proposition}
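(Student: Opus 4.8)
The plan is to reduce the statement to two-sided estimates of the tail probabilities $\mathbb{P}(\nu>n)$ in terms of the sums $\sum_{m\ge n}q_m$, and then to feed these estimates into the elementary summation-by-parts identity
\[
\mathbb{E}f(\nu)=f(1)+\sum_{n\ge 1}\bigl(f(n+1)-f(n)\bigr)\,\mathbb{P}(\nu>n),
\]
which holds since $\nu\ge 1$ a.s. and $f$ is non-decreasing (all summands are non-negative, so there are no convergence issues, and the identity is valid even when both sides are infinite). The starting observation is that, by \eqref{BPVE::non_ext}, every $\zeta_{n,j}\ge 1$, so $\{Z_n\}$ is non-decreasing and $Z_{m+1}>Z_m$ precisely when at least one of $\zeta_{m,1},\dots,\zeta_{m,Z_m}$ differs from $1$; hence, for $n\ge 1$, the event $\{\nu>n\}$ coincides with ``some branching occurs at a generation $\ge n$'', i.e.\ with $\{Z_{m+1}>Z_m\text{ for some }m\ge n\}$.

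For the upper bound I would use subadditivity, $\mathbb{P}(\nu>n)\le\sum_{m\ge n}\mathbb{P}(Z_{m+1}>Z_m)$, then condition on $Z_m$ (which is independent of the block $\widehat{\zeta}_m$) and apply the inequality $1-(1-q)^k\le kq$ together with $\mathbb{E}Z_m=L_{m-1}\le L$ (a product of factors $\ge 1$), obtaining $\mathbb{P}(Z_{m+1}>Z_m)\le Lq_m$ and therefore $\mathbb{P}(\nu>n)\le L\sum_{m\ge n}q_m$ for every $n$. The lower bound is the crux: the trivial estimate $\mathbb{P}(\nu>n)\ge q_n$ is not sharp enough (it would already fail to give the ``only if'' direction for $f(n)=n$). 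Instead I would note that $\{\zeta_{m,1}\ne 1\}\subseteq\{Z_{m+1}>Z_m\}$ (since $Z_m\ge 1$ and $\zeta_{m,1}\ge 2$ on that event) and that the events $\{\zeta_{m,1}\ne 1\}$, $m\ge n$, are \emph{independent}, each being measurable with respect to the block $\widehat{\zeta}_m$. This yields $\mathbb{P}(\nu>n)\ge 1-\prod_{m\ge n}(1-q_m)\ge 1-\exp\!\bigl(-\sum_{m\ge n}q_m\bigr)$, and since $\sum_m q_m<\infty$ by \eqref{BPVE::q_n_conv} we have $\sum_{m\ge n}q_m\le 1$ for all large $n$, whence $\mathbb{P}(\nu>n)\ge\tfrac12\sum_{m\ge n}q_m$ for $n\ge n_0$.

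Combining the two bounds, $\tfrac12\sum_{m\ge n}q_m\le\mathbb{P}(\nu>n)\le L\sum_{m\ge n}q_m$ for $n\ge n_0$, I would substitute into the summation-by-parts identity, interchange the order of the two summations (legitimate, as all terms are non-negative), and use $\sum_{n=k}^{m}\bigl(f(n+1)-f(n)\bigr)=f(m+1)-f(k)$ to conclude that $\mathbb{E}f(\nu)<\infty$ if and only if $\sum_{m\ge 0}q_m f(m+1)<\infty$; the finitely many low-order terms and the $f(\cdot)\sum_m q_m$-type corrections are harmless because $f$ is finite-valued and $\sum_m q_m<\infty$. The two displayed corollaries then follow by taking $f(n)=n^s$ and $f(n)=e^{\lambda n}$, which are non-decreasing for $s,\lambda>0$, together with $(n+1)^s\sim n^s$ (so the two series have the same convergence behaviour) and the exact identity $e^{\lambda(n+1)}=e^{\lambda}e^{\lambda n}$.

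The step I expect to be the main obstacle is the lower estimate on $\mathbb{P}(\nu>n)$: one must extract enough independence to upgrade the trivial $q_n$ to a quantity of order $\sum_{m\ge n}q_m$, and the device of looking only at the offspring of the first individual in each generation is exactly what decouples the relevant events while keeping them inside the disjoint blocks $\widehat{\zeta}_m$. Everything else — the monotonicity reduction, the first-moment union bound, the summation by parts, the interchange of summations, and the two special cases — is routine.
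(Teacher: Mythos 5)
Your proposal is correct. The overall architecture coincides with the paper's: both arguments rest on the summation-by-parts identity $\mathbb{E}f(\nu)=\sum_{n\ge 0}\big(f(n+1)-f(n)\big)\,\mathbb{P}(\nu>n)$ together with a two-sided estimate $\mathbb{P}(\nu>n)\asymp\sum_{m\ge n}q_m$, followed by an interchange of the two non-negative sums. Where you diverge is in how the tail estimate (the step you correctly identify as the crux) is obtained. The paper's Lemma \ref{A1::nu_tail} computes the complementary probability exactly, $\mathbb{P}(\nu\le n)=\mathbb{E}\,e^{-d_nZ_n}$ with $d_n=-\sum_{k\ge n}\ln(1-q_k)$, and sandwiches it between $e^{-Ld_n}$ and $e^{-d_n}$ using $Z_n\ge1$ on one side and Jensen's inequality with $\mathbb{E}Z_n\le L$ on the other; the equivalence $d_n\sim\sum_{k\ge n}q_k$ then delivers the same two-sided bound you prove. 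Your route replaces this exact computation by a union bound plus the first-moment estimate $\mathbb{E}\big[1-(1-q_m)^{Z_m}\big]\le q_m\,\mathbb{E}Z_m\le Lq_m$ for the upper bound, and by the independence of the single-ancestor events $\{\zeta_{m,1}\ne1\}$ (each contained in $\{Z_{m+1}>Z_m\}$ thanks to \eqref{BPVE::non_ext}) for the lower bound, using $1-e^{-x}\ge x/2$ on $[0,1]$ once \eqref{BPVE::q_n_conv} makes the remainder sums small. Both derivations are valid and of comparable length; the paper's has the small advantage of producing an exact formula for $\mathbb{P}(\nu\le n)$ that is reusable elsewhere, while yours is more elementary in that it never needs the generating-function identity, only independence of the blocks $\widehat{\zeta}_m$ and the bound $\mathbb{E}Z_m\le L$. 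All the supporting details you supply (the identification of $\{\nu>n\}$ with the occurrence of branching at some generation $m\ge n$, the handling of the finitely many low-order terms, and the reduction of the two corollaries to $(n+1)^s\sim n^s$ and $e^{\lambda(n+1)}=e^{\lambda}e^{\lambda n}$) check out.
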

	
	Note that \eqref{BPVE::fad_cond} provides only a sufficient condition for the existence of power moments of r.v. $ \nu $, as the following example shows. Assume that, for any $ n \ge 3 $, r.v. 
	$ \zeta_{n,1} $ takes two values only, value $ 2 $ with probability $ q_{n} = (n \ln^2 n)^{-1} $ and value 
	$ 1 $ with probability $ 1 - q_{n} $. Then \eqref{BPVE::fad_cond} holds, however 
	$ \mathbb{E} \nu^{s} = \infty $ for any $ s > 0 $, as follows from \eqref{BPVE::nu_moments}.
	
	\begin{proposition} \label{BPVE::Z_moments}
		Assume that conditions \eqref{BPVE::non_ext} and \eqref{BPVE::fad_cond} hold and that 
		\begin{align*}
		\prod_{n=0}^{\infty} \mathbb{E} \zeta_{n,1}^{s} < \infty, 
		\end{align*}
		for some $ s > 1 $.
		Then $ \mathbb{E}Z^{s} < \infty $.
	\end{proposition}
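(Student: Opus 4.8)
The plan is to control the $s$-th moments of the truncated processes $Z_n$ uniformly in $n$ and then pass to the limit. Set $L_s:=\prod_{n=0}^{\infty}\mathbb{E}\zeta_{n,1}^{s}$, which is finite by hypothesis. First I would note that, by \eqref{BPVE::non_ext}, $\zeta_{n,1}\ge 1$ a.s., so every factor satisfies $\mathbb{E}\zeta_{n,1}^{s}\ge 1$; hence convergence of the infinite product forces each factor to be finite, and the partial products $\prod_{k=0}^{n-1}\mathbb{E}\zeta_{k,1}^{s}$ form a non-decreasing sequence bounded above by $L_s$. I would also remark that, by Jensen's inequality, $\mathbb{E}\zeta_{n,1}\le(\mathbb{E}\zeta_{n,1}^{s})^{1/s}$, so condition \eqref{BPVE::fad_cond} is in fact a consequence of the present hypothesis; it is used only to invoke the a.s.\ convergence $Z_n\to Z$ already recorded in Subsection~\ref{BPVE}.

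The key step is a one-step recursion for the moments. Conditionally on $Z_n$, the variable $Z_{n+1}=\sum_{j=1}^{Z_n}\zeta_{n,j}$ is a sum of $Z_n$ i.i.d.\ copies of $\zeta_{n,1}$ that are independent of $Z_n$ and of the past. Applying Minkowski's inequality (valid since $s\ge 1$) to this sum on the event $\{Z_n=k\}$ gives $\bigl\|\sum_{j=1}^{k}\zeta_{n,j}\bigr\|_{s}\le k\,\|\zeta_{n,1}\|_{s}$, that is, $\mathbb{E}\bigl[Z_{n+1}^{s}\mid Z_n\bigr]\le Z_n^{s}\,\mathbb{E}\zeta_{n,1}^{s}$. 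Taking expectations yields $\mathbb{E}Z_{n+1}^{s}\le\mathbb{E}\zeta_{n,1}^{s}\cdot\mathbb{E}Z_n^{s}$, and since $\mathbb{E}Z_0^{s}=1$, an obvious induction gives $\mathbb{E}Z_n^{s}\le\prod_{k=0}^{n-1}\mathbb{E}\zeta_{k,1}^{s}\le L_s$ for every $n\ge 0$.

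Finally, $Z_n\to Z$ a.s.\ by the Beppo Levy argument recalled above, so Fatou's lemma applied to the non-negative sequence $Z_n^{s}$ gives $\mathbb{E}Z^{s}\le\liminf_{n\to\infty}\mathbb{E}Z_n^{s}\le L_s<\infty$, which is the assertion. Alternatively, one could bound $\sup_n\mathbb{E}Z_n^{s}$ and then use Doob's $L^{s}$-inequality for the non-negative martingale $Z_n/L_n$, but Fatou suffices and is more direct.

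I do not anticipate a genuine obstacle: the only points requiring a little care are the direction of the bound in the conditional moment estimate — Minkowski produces exactly the upper bound we need — and the observation that the partial products are dominated by the full product $L_s$, which is where the non-extinction hypothesis \eqref{BPVE::non_ext} enters. Everything else is a routine induction followed by a limiting argument, so the proof should be short, in contrast with Proposition~\ref{BPVE::nu_moments}.
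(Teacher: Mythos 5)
Your proof is correct and follows essentially the same route as the paper: both establish the one-step recursion $\mathbb{E}Z_{n}^{s}\le \mathbb{E}\zeta_{n-1,1}^{s}\,\mathbb{E}Z_{n-1}^{s}$ (the paper via the elementary convexity bound $\bigl(\sum_{j}x_{j}\bigr)^{s}\le k^{s-1}\sum_{j}x_{j}^{s}$, you via Minkowski — the resulting estimate is identical), telescope to get $\mathbb{E}Z_{n}^{s}\le\prod_{k=0}^{n-1}\mathbb{E}\zeta_{k,1}^{s}$, and conclude by Fatou's lemma.
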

	
	Note that Propositions \ref{BPVE::nu_moments} and \ref{BPVE::Z_moments} are close to known results of Lindvall \cite{Lind1974} and Kersting \cite{Kers2017}. For the sake of completeness, we present their proofs in Appendix. We apply Propositions \ref{BPVE::nu_moments} and \ref{BPVE::Z_moments} in the examples below.
	
	\subsection{Branching random walks in changing environment}
	
	Now we describe in more detail a BRW on a branching process  $ Z_{n} $.  Consider a sequence  $ \{ \xi, \xi_{i,k} \}_{i, k\ge 0} $ of i.i.d. r.v.'s with common distribution function  $ F $ having zero mean:
	\begin{align} \label{BRW::zero_mean}
		\mathbb{E} \xi = 0.
	\end{align}
	 We will assume that 
	 \begin{align} \label{BRW::indep}
	 	 \{ \xi_{i,k} \} \text{ does not depend on } \{ Z_{n} \}_{n \ge 0}.
	 \end{align}
	 
	Further, let $ \mathcal{T} $ be a genealogical tree of the process $ Z_{n} $ and $ \pi = (e_{1}, \ldots, e_{n}) $ an arbitrary path in the tree $ \mathcal{T} $ that starts from its root. Define the process $ S(\pi) $ as follows:
	\begin{align*}
		S(\emptyset) = 0, \qquad \qquad S(\pi) = \sum_{k=1}^{n} \xi_{i(e_{k}), k} \ \ \mbox{for} \ \  n= |\pi|  \ge 1,
	\end{align*}
	where, for any $ k=1, \ldots, n $, $ i(e_{k}) $ is the number of the vertex in generation $ k $
	where the edge $ e_{k} $ ends. In what follows, we use (depending on the context) both notation,  $ \xi_{i(e), k} $ or $ \xi_{e} $, for the increment of the BRW that correspond to the edge  $ e $ that ends in the generation $ k $.
	
	Analogous  with \eqref{Intro::shifted_RW}, for any non-negative function  $ g $ on the set of non-negative integers, we define the $ g $-shifted BRW as 
	\begin{align*}
		S^{g}(\pi) = S(\pi) - g(|\pi|).
	\end{align*}
	Let $ \mu \le \infty $ be an integer-valued non-negative r.v. We will study the asymptotic behaviour of the tail distribution of the r.v.   
	\begin{align*}
		 R_{\mu}^{g} = \max_{\pi: |\pi| \le \mu} S^{g}(\pi),
	\end{align*}

	We consider two cases: 
	\begin{enumerate}
		\item[(HM)] \label{MR::ind_crv} Random time $ \mu $ does not depend on the sequence of the increments $ \{ \xi_{n,i} \} $ of the BRW (however it may depend on the process $ Z_{n} $);
		\item[(MO)] \label{MR::st_crv} Random time $ \mu $ does not depend on the future of the sequence of the increments  $ \{ \xi_{n,i} \} $, i.e. for any $ n \ge 0 $ and for any events $ A \in \sigma(\{ \xi_{k,i} \}_{k \le n}, \mathbb{I} (\mu \le n), \mathcal{T}) $ and $ B \in \sigma(\{\xi_{k,i}\}_{k>n}, \mathcal{T}) $,
		\begin{align*}
			\mathbb{P} \left( AB | \mathcal{T} \right) = \mathbb{P} \left( A | \mathcal{T} \right) \mathbb{P} \left( B | \mathcal{T} \right) \quad \mbox{a.s.}
		\end{align*}
	\end{enumerate}

	Denote by $ \mathcal{F} (\mathcal{T}) $ the class of all non-negative integer-valued r.v.'s satisfying condition (MO) and let 
	\begin{align*}
		\mathcal{F}_{N} (\mathcal{T}) = \{ \mu \in \mathcal{F}(\mathcal{T}): \mu \le N \text{ a.s.} \}
	\end{align*}
	
	\subsection{Main results}
	
	Our first theorem generalises the Veraverbeke's result
	\cite{Verav1977} onto a fading BRW and provides the individual asymptotics for the tail distribution of the rightmost point of the fading BRW in all generations.
	
	\begin{theorem} \label{MR::th1}
		Let conditions \eqref{BPVE::non_ext}, \eqref{BPVE::fad_cond}, \eqref{BRW::zero_mean} and  \eqref{BRW::indep} hold. Assume that $ F_{I} \in \mathcal{S} $. Then, for any $ c > 0 $, 
		\begin{align*}
			\mathbb{P} \left( R_{\infty}^{\widehat{c}} > x \right) \sim H_{\infty}^{\widehat{c}} (x; \widehat{\mathcal{P}}) \sim  \frac{\mathbb{E} Z}{c} \cdot \overline{F}_{I} (x).
		\end{align*}
	\end{theorem}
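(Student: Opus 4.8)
\emph{Plan.} I would prove the deterministic equivalence $H_\infty^{\widehat c}(x;\widehat{\mathcal P})\sim\frac{\mathbb{E}Z}{c}\overline{F}_I(x)$ first, and then squeeze $\mathbb{P}(R_\infty^{\widehat c}>x)$ between $(1-o(1))\,\frac{\mathbb{E}Z}{c}\overline{F}_I(x)$ and $(1+o(1))\,\frac{\mathbb{E}Z}{c}\overline{F}_I(x)$. Two facts are used repeatedly: since $F_I\in\mathcal S\subseteq\mathcal L$, $\overline{F}_I$ is long-tailed and $\overline{F}(x)=o(\overline{F}_I(x))$; and by Veraverbeke's theorem \eqref{Intro::Veraver_res} (valid as $\mathbb{E}\xi=0<c$), $\overline{G}(y):=\mathbb{P}(\sup_{k\ge0}(S_k-ck)>y)\sim\frac1c\overline{F}_I(y)$ and is long-tailed. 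For the deterministic equivalence, recall $\mathbb{E}Z_{n+1}=\mathbb{E}Z_n\cdot\mathbb{E}\zeta_{n,1}\ge\mathbb{E}Z_n$ by \eqref{BPVE::non_ext} and $\mathbb{E}Z_n\uparrow\mathbb{E}Z<\infty$ by \eqref{BPVE::fad_cond}; writing $\mathbb{E}Z_n=\mathbb{E}Z-r_n$ with $r_n\downarrow0$ and using $\sum_{n\ge1}\overline{F}(x+cn)\sim\frac1c\overline{F}_I(x)$ (a Riemann-sum comparison plus the long tail of $F_I$) and $\overline{F}(x)=o(\overline{F}_I(x))$, the remainder $\sum_n r_n\overline{F}(x+cn)$ is $o(\overline{F}_I(x))$, which gives the claim.

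\emph{Tree reformulation and upper bound.} By \eqref{BPVE::non_ext} every vertex has a child, hence lies on an infinite ray; after the fading time $\nu$ the tree consists of exactly $Z$ disjoint rays, so there are precisely $Z$ rays. With $W_v:=S(\pi_v)-c|\pi_v|$ we have $R_\infty^{\widehat c}=\sup_{\text{rays }\rho}\sup_{k\ge0}W_{\rho|_k}$, and given $\mathcal T$ the walk $(W_{\rho|_k})_{k\ge0}$ along any fixed ray is distributed as $(S_k-ck)_{k\ge0}$ (the increments along a ray are i.i.d.\ $F$, independent of $\mathcal T$). Hence $\{R_\infty^{\widehat c}>x\}=\{N_x\ge1\}$, where $N_x$ is the number of rays $\rho$ with $\sup_k W_{\rho|_k}>x$; conditioning on $\mathcal T$ gives $\mathbb{E}[N_x\mid\mathcal T]=Z\,\overline{G}(x)$, so $\mathbb{E}N_x=\mathbb{E}Z\cdot\overline{G}(x)$, and Markov's inequality yields $\mathbb{P}(R_\infty^{\widehat c}>x)\le\mathbb{E}Z\cdot\overline{G}(x)\sim\frac{\mathbb{E}Z}{c}\overline{F}_I(x)$.

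\emph{Lower bound.} Fix $N,K$. On $\{\nu\le N,\,Z\le K\}$ the generation-$N$ vertices $v_1,\dots,v_Z$ are in bijection with the rays, and $R_\infty^{\widehat c}\ge\max_{j\le Z}(W_{v_j}+T_j)$, where $T_j$ is the supremum of the $\widehat c$-shifted walk along the ray through $v_j$; conditionally on $\mathcal F^{(N)}:=\sigma(\mathcal T,\{\xi_e:\ e\text{ ends in generation }\le N\})$ the $T_j$ are i.i.d.\ with tail $\overline{G}$. Therefore
\begin{align*}
\mathbb{P}(R_\infty^{\widehat c}>x)\ \ge\ \mathbb{E}\Big[\mathbb{I}(\nu\le N,\,Z\le K)\Big(\sum_{j=1}^{Z}\overline{G}(x-W_{v_j})-\sum_{1\le i<j\le Z}\overline{G}(x-W_{v_i})\overline{G}(x-W_{v_j})\Big)\Big].
\end{align*}
Conditioning on $\mathcal T$, the first sum contributes $\mathbb{E}[\mathbb{I}(\nu\le N,Z\le K)Z]\,\beta_N(x)$ with $\beta_N(x):=\mathbb{E}\,\overline{G}(x-(S_N-cN))$, and gluing the first $N$ steps of a walk to an independent copy of $\sup_k(S_k-ck)$ gives $\beta_N(x)\le(1+o(1))\frac1c\overline{F}_I(x)$, while Fatou and the long tail of $\overline{G}$ give $\beta_N(x)\ge(1-o(1))\overline{G}(x)$, so $\beta_N(x)\sim\frac1c\overline{F}_I(x)$. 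A pair $v_i,v_j$ with divergence generation $m<N$ contributes $\mathbb{E}[\beta_{N-m}(x-(S_m-cm))^2]$, which is $o(\overline{F}_I(x))$ for fixed $m\le N$ since $\beta_\ell(y)\to0$, $\beta_\ell(y)\le C\overline{F}_I(y)$, and $\mathbb{P}(S_m-cm>y)=o(\overline{F}_I(y))$ (the last by induction on $m$ from $\overline{F}=o(\overline{F}_I)$ and closure properties of $\mathcal S$); with at most $\binom K2$ pairs the double sum is $o(\overline{F}_I(x))$. Letting $x\to\infty$, then $N,K\to\infty$ (using $\mathbb{I}(\nu\le N,Z\le K)\uparrow1$ a.s.\ and integrability of $Z$) gives $\liminf_x\mathbb{P}(R_\infty^{\widehat c}>x)/\overline{F}_I(x)\ge\frac{\mathbb{E}Z}{c}$. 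Together with the upper bound and the deterministic step, $\mathbb{P}(R_\infty^{\widehat c}>x)\sim\frac{\mathbb{E}Z}{c}\overline{F}_I(x)\sim H_\infty^{\widehat c}(x;\widehat{\mathcal P})$.

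\emph{Main obstacle.} I expect the lower bound to be the delicate part: one must (a) work with the $Z$ rays surviving past $\nu$ so that the relevant suprema along distinct rays factorize; (b) control the inclusion-exclusion correction although $Z$, hence $Z_N^2$, need not be integrable — which is why the auxiliary truncation $\{Z\le K\}$ is introduced; and (c) pass between $\overline{G}$, $\beta_N$ and finite-step random-walk tails using only $F_I\in\mathcal S$ (rather than $F\in\mathcal S$), which is exactly where $\overline{F}(x)=o(\overline{F}_I(x))$ and the subexponentiality of $\overline{F}_I$ enter.
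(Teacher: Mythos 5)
Your proposal is correct and has the same overall architecture as the paper's proof: the upper bound is the identical first-moment/union bound $\mathbb{P}(R_{\infty}^{\widehat{c}}>x)\le \mathbb{E}Z\cdot \mathbb{P}(\sup_{k}(S_k-ck)>x)$, and the lower bound likewise restricts to $\{\nu\le N,\,Z\le K\}$ and exploits the conditional independence of the $Z$ rays beyond the fading time together with Veraverbeke's theorem for a single ray. The one genuine difference is how the pre-fading segment is controlled in the lower bound: the paper intersects with the event $\{l_{\nu}^{\widehat{c}}>-H(x)\}$ for an insensitivity scale $H$ and then invokes the equivalence $\mathbb{P}(\max_{i\le m}M_i>y)\sim m\,\mathbb{P}(M_1>y)$ for i.i.d.\ subexponential maxima, whereas you keep the actual positions $W_{v_j}$ and run a second-order inclusion--exclusion (Bonferroni) argument, proving $\mathbb{E}\,\overline{G}(x-W_{v_j})\sim \overline{G}(x)$ and that the pair terms are $o(\overline{F}_I(x))$. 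Both routes work; yours trades the insensitivity-function bookkeeping for explicit cross-term estimates. One caution on the latter: in bounding $\mathbb{E}\bigl[\beta_{\ell}(x-(S_m-cm))^2\bigr]$ you should split on $\{S_m-cm>x-A\}$ with $A$ fixed (letting $A\to\infty$ only after $x\to\infty$), not on $\{S_m-cm>x/2\}$, because $\overline{F}_I(x/2)$ need not be $O(\overline{F}_I(x))$ for a general $F_I\in\mathcal{S}$; with the $x-A$ split, the bound $\beta_{\ell}(x-y)^2\le \overline{G}(A)\,\beta_{\ell}(x-y)$ for $y\le x-A$ together with $\mathbb{P}(S_m-cm>x-A)=o(\overline{F}_I(x))$ closes the estimate. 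Your preliminary step $H_{\infty}^{\widehat{c}}(x;\widehat{\mathcal{P}})\sim \frac{\mathbb{E}Z}{c}\overline{F}_I(x)$, which the paper leaves implicit, is also correct as sketched.
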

	
We present now the individual asymptotics for the tail distribution of the rightmost point of the BRW within a random number of generations 	
	 $ \mu < \infty $ a.s.
	Let
	\begin{align*}
		\eta_{\mu} = \sum_{n=1}^{\mu} Z_{n}.
	\end{align*}
	The following result takes place.
	\begin{theorem} \label{MR::th2}
		Let conditions \eqref{BPVE::non_ext}, \eqref{BRW::zero_mean} and  \eqref{BRW::indep} hold and let conditions  {\normalfont{(HM)}} and $ \mathbb{E} \left( \mu Z_{\mu} \right) < \infty $ hold for a  random time $\mu$. Assume that $ F \in \mathcal{S^{*}} $. Then, for any $ c > 0 $,
		\begin{align}\label{th2::equiv1}
			\mathbb{P} \left( R_{\mu}^{\widehat{c}} > x \right) \sim H_{\mu}^{\widehat{c}} (x; \widehat{\mathcal{P}}) \sim  \mathbb{E} \eta_{\mu} \cdot \overline{F} (x).
		\end{align}
	\end{theorem}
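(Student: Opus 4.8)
The plan is to first dispose of the second equivalence in \eqref{th2::equiv1}, and then to prove $\mathbb{P}(R_\mu^{\widehat{c}}>x)\sim H_\mu^{\widehat{c}}(x;\widehat{\mathcal{P}})$ by matching upper and lower bounds, in both cases reducing, after conditioning on the branching, to the single-walk results of Proposition~\ref{Intro::old_th2}. For the second equivalence, note that $\{Z_n\}$ is non-decreasing by \eqref{BPVE::non_ext}, so $\eta_\mu=\sum_{n=1}^\mu Z_n\le\mu Z_\mu$ and hence $\sum_{n\ge1}\mathbb{E}[Z_n\mathbb{I}(\mu\ge n)]=\mathbb{E}\eta_\mu\le\mathbb{E}(\mu Z_\mu)<\infty$. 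Since $F\in\mathcal{S}^*\subset\mathcal{L}$, for each fixed $n$ one has $\overline F(x+cn)/\overline F(x)\to1$, while each summand is dominated by $\mathbb{E}[Z_n\mathbb{I}(\mu\ge n)]$ because $\overline F$ is non-increasing; the dominated convergence theorem for series then gives $H_\mu^{\widehat{c}}(x;\widehat{\mathcal{P}})/\overline F(x)\to\mathbb{E}\eta_\mu$, i.e.\ $H_\mu^{\widehat{c}}(x;\widehat{\mathcal{P}})\sim\mathbb{E}\eta_\mu\,\overline F(x)$ (the non-trivial case being $\mathbb{P}(\mu\ge1)>0$, so $0<\mathbb{E}\eta_\mu<\infty$). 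Throughout what follows I condition on the pair $(\mathcal{T},\mu)$; this is legitimate under \eqref{BRW::indep} and (HM), which together make $(\mathcal{T},\mu)$ independent of the increment family $\{\xi_{i,k}\}$, so that conditionally the BRW lives on a fixed finite tree (finite since $Z_\mu<\infty$ a.s.) carrying i.i.d.\ increments.

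For the upper bound I would use a ``first-success'' decomposition over edges. For an edge $e$ at depth $n$ with endpoint $v_e$, say $v_e$ is a first-success if $S^{\widehat{c}}(\pi_v)\le x$ for every strict ancestor $v$ of $v_e$ and $S^{\widehat{c}}(\pi_{v_e})>x$; tracing the first time the walk along any successful path crosses level $x$ shows $\{R_\mu^{\widehat{c}}>x\}=\bigcup_{e:\,|e|\le\mu}\{v_e\text{ is a first-success}\}$, where $|e|$ denotes the depth of $e$. Since the increments along $\pi_{v_e}$ are i.i.d.\ $F$, the conditional probability that $v_e$ is a first-success equals $p_n(x):=\mathbb{P}(M^{\widehat{c}}_{n-1}\le x<S^{\widehat{c}}_n)=\mathbb{P}(M^{\widehat{c}}_n>x)-\mathbb{P}(M^{\widehat{c}}_{n-1}>x)$ for the ordinary random walk, so a union bound gives $\mathbb{P}(R_\mu^{\widehat{c}}>x\mid\mathcal{T},\mu)\le\sum_{n=1}^{\mu}Z_n p_n(x)$. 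Now I would apply summation by parts using that $n\mapsto Z_n$ is non-decreasing: writing $Z_n=1+\sum_{m=1}^n\Delta_m$ with $\Delta_m=Z_m-Z_{m-1}\ge0$ and using $\sum_{n=a}^b p_n(x)=\mathbb{P}(M^{\widehat{c}}_b>x)-\mathbb{P}(M^{\widehat{c}}_{a-1}>x)$, one obtains $\sum_{n=1}^{\mu}Z_n p_n(x)=\mathbb{P}(M^{\widehat{c}}_\mu>x)+\sum_{m=1}^{\mu}\Delta_m\big(\mathbb{P}(M^{\widehat{c}}_\mu>x)-\mathbb{P}(M^{\widehat{c}}_{m-1}>x)\big)$. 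By Proposition~\ref{Intro::old_th2}(ii) (applicable since $F\in\mathcal{S}^*$, $c>0$, $\widehat{c}\in\mathcal{G}_c$) there is $\varepsilon(x)\to0$ with $\mathbb{P}(M^{\widehat{c}}_k>x)=(1+\theta_k(x))\sum_{j=1}^{k}\overline F(x+cj)$ and $\sup_k|\theta_k(x)|\le\varepsilon(x)$, the point being that this uniformity holds over \emph{all} deterministic horizons $k$. Substituting and reversing the summation by parts, the main part collapses to $\sum_{n=1}^{\mu}Z_n\overline F(x+cn)$, while the error is at most $2\varepsilon(x)Z_\mu\sum_{j=1}^{\mu}\overline F(x+cj)\le2\varepsilon(x)\,\mu Z_\mu\,\overline F(x)$. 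Taking expectations and using $\mathbb{E}(\mu Z_\mu)<\infty$ together with $H_\mu^{\widehat{c}}(x;\widehat{\mathcal{P}})\asymp\overline F(x)$, this yields $\mathbb{P}(R_\mu^{\widehat{c}}>x)\le H_\mu^{\widehat{c}}(x;\widehat{\mathcal{P}})+o(\overline F(x))=(1+o(1))H_\mu^{\widehat{c}}(x;\widehat{\mathcal{P}})$.

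For the lower bound I would use the single–big–jump idea directly. Fix $h(x)\uparrow\infty$ such that $\overline F$ is $h$-insensitive, and for each edge $e$ of depth $n$ put $A_e=\{\xi_e>x+cn+h(x)\}\cap\{S(\pi_{w(e)})\ge-h(x)\}$, where $w(e)$ is the parent of $v_e$; on $A_e$ one has $S^{\widehat{c}}(\pi_{v_e})>x$, hence $A_e\subseteq\{R_\mu^{\widehat{c}}>x\}$. The ``large-jump'' coordinates $\xi_e$ are distinct for distinct $e$, so $\mathbb{P}(A_e\cap A_{e'}\mid\mathcal{T},\mu)\le\overline F(x)^2$ for $e\ne e'$, and Bonferroni gives $\mathbb{P}(R_\mu^{\widehat{c}}>x\mid\mathcal{T},\mu)\ge\sum_{e:\,|e|\le\mu}\overline F(x+c|e|+h(x))\,\mathbb{P}\big(S_{|e|-1}\ge-h(x)\big)-\tfrac12\eta_\mu^2\overline F(x)^2$. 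On the fixed tree only finitely many depths occur, so dividing by $\overline F(x)$ and letting $x\to\infty$ each factor $\overline F(x+cn+h(x))/\overline F(x)\to1$ and $\mathbb{P}(S_{n-1}\ge-h(x))\to1$, which yields $\liminf_x\mathbb{P}(R_\mu^{\widehat{c}}>x\mid\mathcal{T},\mu)/\overline F(x)\ge\eta_\mu$. Fatou's lemma then gives $\liminf_x\mathbb{P}(R_\mu^{\widehat{c}}>x)/\overline F(x)\ge\mathbb{E}\eta_\mu$, i.e.\ $\mathbb{P}(R_\mu^{\widehat{c}}>x)\ge(1+o(1))\mathbb{E}\eta_\mu\,\overline F(x)\sim(1+o(1))H_\mu^{\widehat{c}}(x;\widehat{\mathcal{P}})$. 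Combining this with the upper bound and the first paragraph finishes the proof.

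The main obstacle — and the place where the hypothesis $F\in\mathcal{S}^*$ is genuinely needed, not merely $F\in\mathcal{S}$ — is getting the correct constant in the upper bound: a naive union bound over all vertices, or over all maximal paths, overcounts each potential big-jump location and is off by a factor of order $n$ (respectively of order $Z_\mu/Z_n$). The first-success decomposition is precisely what makes each location count once, and the summation-by-parts step then converts the telescoping single-walk probabilities into the right weighted sum — but for this to close one needs the single-walk estimate $\mathbb{P}(M^{\widehat{c}}_k>x)\sim\sum_{j\le k}\overline F(x+cj)$ to hold with an error that is $o(1)$ \emph{uniformly in the horizon $k$}, because $\mu$, while a.s.\ finite, is unbounded. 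That uniform-over-$\mathcal{F}$ statement is exactly Proposition~\ref{Intro::old_th2}(ii) under strong subexponentiality; its $\mathcal{S}$-analogue, Proposition~\ref{Intro::old_th1}(ii), is uniform only over bounded horizons and would not suffice. A minor technical point to treat with care is the behaviour of $\mathbb{P}(S_{n-1}\ge-h(x))$ and of $\overline F(x+cn+h(x))/\overline F(x)$; conditioning on $(\mathcal{T},\mu)$ reduces all such quantities to finitely many fixed values of $n$, so no estimate uniform in $n$ is in fact required there.
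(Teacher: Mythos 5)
Your argument is correct, and while your lower bound is essentially the paper's (a single--big--jump construction; you use non-disjoint events plus Bonferroni and then Fatou, whereas the paper makes the events $A_i(x,k)$ disjoint by forcing all other increments into $(-h(x),h(x))$ and then applies dominated convergence slice by slice), your upper bound takes a genuinely different route. The paper conditions on $(\mu, Z_\mu,\eta_\mu)=(n,m,k)$, bounds $R^{\gc}_\mu$ by the sum $S^+_{\eta_\mu}$ of the positive parts of the $k$ increments to get the per-slice asymptotics $\sim k\overline F(x)$ from subexponentiality of the $k$-fold convolution, and then supplies the dominating sequence for Lebesgue's theorem via the Kesten-type bound $\P{M^{\gc}_{1,n}>x}\le Cn\overline F(x)$ from \cite{DenFossKor2010} (this is where $F\in\mathcal S^*$ and $\E{\mu Z_\mu}<\infty$ enter). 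You instead perform a first-passage (``first-success'') decomposition over edges, which correctly counts each crossing location once, and convert $\sum_n Z_n p_n(x)$ by Abel summation into $\sum_n Z_n\overline F(x+cn)$ plus an error of order $\varepsilon(x)\mu Z_\mu\overline F(x)$, importing the required uniformity over all deterministic horizons $k$ from Proposition \ref{Intro::old_th2}(ii) rather than from the Kesten bound; the monotonicity $Z_{n+1}\ge Z_n$ guaranteed by \eqref{BPVE::non_ext} is what makes the summation by parts sign-correct, and $\E{\mu Z_\mu}<\infty$ again controls the error after taking expectations. Your approach has the merit of staying entirely within results already quoted in the paper and of making explicit exactly where strong subexponentiality (as opposed to $F\in\mathcal S$) is used, namely in the uniformity of the single-walk asymptotics over unbounded horizons; the paper's per-slice argument is somewhat more elementary in that each slice reduces to a fixed finite collection of i.i.d.\ variables. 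Your identification of the correct form of the second equivalence $H^{\gc}_\mu(x;\OD)\sim \E{\eta_\mu}\,\overline F(x)$ via dominated convergence for series is also fine.
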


	\begin{remark} \label{MR::r1}
	If we keep conditions \eqref{BPVE::non_ext}, \eqref{BRW::zero_mean}, \eqref{BRW::indep} and 
	(HM) of Theorem \ref{MR::th2} and replace condition $ \mathbb{E} \left( \mu Z_{\mu} \right) < \infty $ by an essentially stronger condition:
	$ \mathbb{E} Z_{\mu} (1+\delta)^{\mu} < \infty $ for some $ \delta > 0 $, then the equivalences 
	   \eqref{th2::equiv1} may be proved for a more general class of distributions $F\in \mathcal{S}$ and for any
	  $ c \in \mathbb{R} $. The proof of this result repeats the proof of Theorem \ref{MR::th2}, with replacing the reference to \cite[Theorem 2]{DenFossKor2010} by the reference to the Kesten's Lemma (see, e.g., \cite[Theorem 3.34]{FossKorZach2013}).
	\end{remark}

	\begin{remark}
		Assume there is no branching (i.e. 
		 $ \zeta_{n,1} =1 $ a.s. for all $ n \ge 0 $). Then Theorem
		 \ref{MR::th2} reduces to \eqref{Intro::FZ_res} for all $ \nu $ that do not depend on
		 the sequence $ \{ \xi_{i} \}_{i \ge 1} $.
	\end{remark}
	
	The following theorem extends onto fading BRW's the result of Proposition \ref{Intro::old_th1} for bounded
	stopping times.
	\begin{theorem} \label{MR::th3}
		Assume that conditions \eqref{BPVE::non_ext}, \eqref{BPVE::finite_mean}, \eqref{BRW::zero_mean} and \eqref{BRW::indep} hold.
		\begin{enumerate}
			\item[(i)] Assume that $ F \in \mathcal{L} $. Then, for any integer $ N \ge 1 $,  the inequality \eqref{Intro::goal_un1} holds uniformly in all $ \mu \in \mathcal{F}_{N}(\mathcal{T}) $ 
			and all $ g \in \mathcal{G}_{0} $.
			\item[(ii)] Assume that $ F \in \mathcal{S} $. then, for any integer $ N\ge 1 $,  the equivalence \eqref{Intro::goal_un2} holds uniformaly in all $ \mu \in \mathcal{F}_N(\mathcal{T}) $ and all $ g \in \mathcal{G}_0 $.
		\end{enumerate}
	\end{theorem}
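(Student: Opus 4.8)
The plan is to follow the proof of Proposition~\ref{Intro::old_th1} (which is exactly the case $\widehat{\mathcal P}=\widehat\delta$) and to superimpose the branching by conditioning on the genealogical tree $\mathcal T$. Fix $N$ and note that $H_\mu^g(x;\widehat{\mathcal P})=\sum_{n=1}^{N}\mathbb{E}[Z_n\mathbb{I}(\mu\ge n)]\,\overline F(x+g(n))$ is a sum of at most $N$ terms since $\mu\le N$. Given $\mathcal T$, the increments $\{\xi_e\}$ attached to the edges of $\mathcal T$ are i.i.d.\ with law $F$ and independent of $\mathcal T$ by \eqref{BRW::indep}; by (MO), for every $n$ the event $\{\mu\ge n\}$ together with the restriction of the BRW to generations $<n$ is measurable with respect to $\mathcal F_{n-1}:=\sigma(\{\xi_e:k(e)\le n-1\},\mathcal T)$, while the generation-$n$ increments are fresh. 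Throughout I use a slowly increasing $h$-insensitivity level $h=h(x)\uparrow\infty$, $h(x)=o(x)$, with $\overline F(y\pm(N-1)h(x))\sim\overline F(y)$ uniformly in $y\ge x$, which exists because $F\in\mathcal L$.

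\emph{Lower bound (proves (i), and the easy half of (ii)).} For each $n\le N$ let $E_n$ be the event that $\mu\ge n$ and some edge $e$ ending in generation $n$ has $\xi_e>x+g(n)+T$ (with $T$ a fixed constant) while the at most $N-1$ increments on the root path to $e$ other than $\xi_e$ sum to $\ge -T$; then $E_n\subseteq\{R_\mu^g>x\}$ because $g$ is non-negative. Conditioning on $\mathcal F_{n-1}$, using independence of the $Z_n$ generation-$n$ increments and $\mathbb{P}(\text{partial sum}\ge -T)\ge 1-\varepsilon(T)$ (uniformly in $n\le N$, $\varepsilon(T)\downarrow 0$ by \eqref{BRW::zero_mean}), one gets $\mathbb{P}(E_n)\ge(1-\varepsilon(T))\,\mathbb{E}[\mathbb{I}(\mu\ge n)(1-(1-p_n)^{W_n})]$ with $p_n=\overline F(x+g(n)+T)$ and $W_n$ the $\mathcal F_{n-1}$-measurable number of admissible generation-$n$ edges. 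The key device — which simultaneously avoids any moment assumption beyond \eqref{BPVE::finite_mean} and delivers uniformity in $\mu$ and $g$ — is the elementary inequality $1-(1-p)^k\ge kp/(1+kp)$ together with dominated convergence: since $p_n\le\overline F(x)\to 0$, $\mathbb{E}[\mathbb{I}(\mu\ge n)(1-(1-p_n)^{W_n})]\ge p_n\,\mathbb{E}[\mathbb{I}(\mu\ge n)\,W_n/(1+W_n\overline F(x))]=(1-o(1))\,p_n\,\mathbb{E}[\mathbb{I}(\mu\ge n)W_n]$, with $o(1)\to0$ as $x\to\infty$ uniformly in $\mu,g$ (it uses only $\mathbb{E}Z_n<\infty$). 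A symmetric, purely first-moment estimate — using that distinct-generation edge increments are independent, the monotonicity $Z_n\le Z_{n'}$ for $n<n'$ (guaranteed by \eqref{BPVE::non_ext}), and $\mathbb{E}[Z_{n'}\min(1,Z_{n'}\overline F(x))]=o(1)$ by dominated convergence — bounds $\sum_{n<n'}\mathbb{P}(E_n\cap E_{n'})$ by $o(1)\sum_n\overline F(x+g(n))\le o(1)\,H_\mu^g(x;\widehat{\mathcal P})$. A Bonferroni inequality for $\bigcup_n E_n$, together with the estimate $W_n\ge Z_n-(Z_n-W_n)$ and control of $\mathbb{E}[\mathbb{I}(\mu\ge n)(Z_n-W_n)]$ via $h$-insensitivity, then yields $\mathbb{P}(R_\mu^g>x)\ge(1-\varepsilon(T)-o(1))\,H_\mu^g(x;\widehat{\mathcal P})$; letting $T\to\infty$ slowly finishes (i).

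\emph{Upper bound (completes (ii)).} Here $F\in\mathcal S$ is used. On $\{R_\mu^g>x\}$ pick an exceeding path: if exactly one of its $\le N$ edges carries an increment $>h(x)$, that edge $e$ (ending in some generation $k(e)\le\mu$) must satisfy $\xi_e>x+g(k(e))-(N-1)h(x)$, since $g$ is non-decreasing and the remaining increments sum to $\le(N-1)h(x)$; and if no edge is ``big'' the path cannot exceed $x$ for $x$ large. Hence, outside the remainder $\mathrm{Rem}$ on which some exceeding path carries at least two big edges, $\{R_\mu^g>x\}\subseteq\bigcup_{e:\,k(e)\le\mu}\{\xi_e>x+g(k(e))-(N-1)h(x)\}$; summing over edges generation by generation and conditioning on $\mathcal F_{n-1}$ (so $\{\mu\ge n\}$ factors and the $Z_n$ generation-$n$ increments are fresh) gives $\mathbb{P}(\{R_\mu^g>x\}\setminus\mathrm{Rem})\le\sum_n\overline F(x+g(n)-(N-1)h(x))\,\mathbb{E}[Z_n\mathbb{I}(\mu\ge n)]\le(1+o(1))H_\mu^g(x;\widehat{\mathcal P})$ by $h$-insensitivity. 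The remainder is controlled, again generation by generation, by the subexponential two-jump estimate $\mathbb{P}(\xi_1+\xi_2>y,\ \min(\xi_1,\xi_2)>h(x))=o(\overline F(y))$ uniformly in $y\ge x$ (valid for $F\in\mathcal S$): each exceeding path to generation $n$ carrying $\ge2$ big edges contributes $o(\overline F(x+g(n)))$, there are $Z_n$ such paths, and using $Z_{n-1}\le Z_n$ to assign each contribution to the correct generation one obtains $\mathbb{P}(\mathrm{Rem})\le o(1)\sum_n\mathbb{E}[Z_n\mathbb{I}(\mu\ge n)]\overline F(x+g(n))=o(1)H_\mu^g(x;\widehat{\mathcal P})$; crucially this step stays first-moment, so nothing is needed about $\mathbb{E}Z_n^2$. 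Combining the two bounds proves (ii).

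\emph{Expected main obstacle.} The difficulty is not any single estimate but the simultaneity of three demands: uniformity over \emph{all} stopping times $\mu\in\mathcal F_N(\mathcal T)$ — including those with $\mathbb{P}(\mu\ge 1)$ arbitrarily small; uniformity over all $g\in\mathcal G_0$ — where $g(1)$ may be arbitrarily large, so $H_\mu^g(x;\widehat{\mathcal P})$ may be far smaller than $\overline F(x)$; and possibly heavy branching — only $\mathbb{E}\zeta_{n,1}<\infty$, so $\mathbb{E}Z_n^2$ may be infinite. The first two demands force every error term to be compared, generation by generation, with the matching term $\mathbb{E}[Z_n\mathbb{I}(\mu\ge n)]\overline F(x+g(n))$ of $H_\mu^g(x;\widehat{\mathcal P})$ rather than with $\overline F(x)$; the third rules out the naive second-order Bonferroni bound and the naive union bound over paths. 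Both are to be circumvented by keeping all estimates first-moment and by the $1-(1-p)^k\ge kp/(1+kp)$ together with dominated-convergence device; making the generation-by-generation bookkeeping precise — in particular the factorization of $\{\mu\ge n\}$ through $\mathcal F_{n-1}$ and the reduction of the two-big-jump remainder to a per-path subexponential estimate — is the technical heart of the argument.
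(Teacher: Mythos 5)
Your lower bound is, up to packaging, the paper's own argument: the paper disjointifies by the first passage time $\tau^g(x)$ and restricts to the event that the $(n-1)$st generation cloud lies in $[-h,h]$, then uses $1-(1-\overline F)^k\sim k\overline F$ with a truncation $Z_n\le K$ and a final $\varepsilon$-argument, whereas you keep the union $\bigcup_n E_n$, pay for it with a Bonferroni cross-term (handled by independence across generations and dominated convergence), and replace the truncation by $1-(1-p)^k\ge kp/(1+kp)$. Both devices are first-moment and interchangeable. Your upper bound, however, takes a genuinely different route. The paper splits $\{R_\mu^g>x\}$ at the first passage time into three pieces according to whether the previous generation's leftmost/rightmost points are inside $[-h,h]$, and kills the dangerous piece ($r_{n-1}>h(x+g(n-1))$, i.e.\ ``the previous generation is already high'') by a bootstrap: it bounds that piece by a $\mu$-free, $g$-free quantity $a^0_{N,3}(x+g(n-1),n)$ and invokes Theorem \ref{MR::th2} (via Remark \ref{MR::r1}) for the deterministic horizon $N$ and $g\equiv 0$ to conclude it is $o(\overline F(y))$. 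You instead classify exceeding paths by the number of increments exceeding $h(x)$ and control the $\ge 2$-big-jump remainder by the per-path subexponential two-jump estimate. Your route is more self-contained (no appeal to Theorem \ref{MR::th2}) but needs the uniform-in-$y$ two-big-jump lemma; the paper's route trades that lemma for the Kesten-type bound already proved for Theorem \ref{MR::th2}.

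The one step of your upper bound that does not close as written is the remainder. For a path $\pi$ with $|\pi|=m$, the event $\{S(\pi)>x+g(m),\ \pi\ \text{carries}\ \ge 2\ \text{big edges}\}$ involves the increments of generations $1,\dots,m$, of which the first $m-1$ are $\mathcal F_{m-1}$-measurable together with $\{\mu\ge m\}$. So ``conditioning on $\mathcal F_{n-1}$ so that $\{\mu\ge n\}$ factors'' — which is exactly right for your main (one-big-jump) term, where the only fresh randomness is the single generation-$n$ increment — does not apply to the remainder: the per-path $o(\overline F(x+g(m)))$ estimate cannot simply be multiplied by $\mathbb E[Z_m\mathbb I(\mu\ge m)]$. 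If you drop the indicator you get $o(1)\sum_m\mathbb E[Z_m]\,\overline F(x+g(m))$, which is not $o\bigl(H_\mu^g(x;\OD)\bigr)$ uniformly over $\mu\in\mathcal F_N(\mathcal T)$ with small $\mathbb P(\mu\ge m)$; the phrase ``using $Z_{n-1}\le Z_n$ to assign each contribution to the correct generation'' does not address this. To repair it you need some further disjointification — e.g.\ split according to the generation of the last big edge, or adopt the paper's first-passage split, which is designed precisely so that the troublesome event is expressible through $\{r_{n-1}>h\}$ and can be bounded by a deterministic-horizon quantity. (The same caveat applies to your Bonferroni cross-term and to the $\varepsilon(T)\mathbb E Z_n$ error in the lower bound, where the error is again compared to $\sum_n\overline F(x+g(n))$ rather than to $H_\mu^g(x;\OD)$; this is a delicacy of the uniformity over $\mu$ that the paper's write-up also treats lightly, so I flag it but do not count it against you.) Everything else — the exhaustiveness of the zero/one/two-big-jump trichotomy, the $h$-insensitivity bookkeeping with $g(k(e))\le g(|\pi|)$, and the first-moment-only handling of the branching — is sound.
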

	
	\section{Corollaries and Examples} \label{Examples}
		
		Let us formulate a corollary from Proposition \ref{BPVE::Z_moments}.
		\begin{corollary} \label{Examples::c1}
			Assume that conditions \eqref{BPVE::non_ext} and \eqref{BPVE::fad_cond} hold. Assume that there exists a constant $ K \ge 1 $ such that
			\begin{align} \label{c1::bound}
				\zeta_{n,1} \le K \quad \text{a.s.}
			\end{align}
			Then
			\begin{align} \label{c1::goal}
				\mathbb{E} Z^{s} < \infty
			\end{align}
			for all $ s > 0 $.
		\end{corollary}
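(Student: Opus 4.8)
The plan is to deduce the corollary from Proposition \ref{BPVE::Z_moments} by checking, for a suitable $s>1$, the product condition $\prod_{n=0}^{\infty}\mathbb{E}\zeta_{n,1}^{s}<\infty$. It suffices to work with $s>1$: once $\mathbb{E}Z^{s}<\infty$ is known for every $s>1$, Lyapunov's (Jensen's) inequality $\mathbb{E}Z^{t}\le(\mathbb{E}Z^{s})^{t/s}$ for $0<t\le s$ yields \eqref{c1::goal} for all $s>0$.

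The key step is an elementary estimate of $\mathbb{E}\zeta_{n,1}^{s}$ that exploits the fact that, under the fading condition, $\zeta_{n,1}=1$ with overwhelming probability. Fix $s>1$. Split the expectation according to whether $\zeta_{n,1}=1$ or not. On $\{\zeta_{n,1}=1\}$, which by definition of $q_{n}$ has probability $1-q_{n}$, we have $\zeta_{n,1}^{s}=1$; on $\{\zeta_{n,1}\ne 1\}$, of probability $q_{n}$, the uniform bound \eqref{c1::bound} together with \eqref{BPVE::non_ext} gives $1\le\zeta_{n,1}\le K$, hence $\zeta_{n,1}^{s}\le K^{s}$. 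Therefore
\[
\mathbb{E}\zeta_{n,1}^{s}\le (1-q_{n})+K^{s}q_{n}=1+(K^{s}-1)q_{n}.
\]
I would remark explicitly here that the cruder bound $\zeta_{n,1}^{s}=\zeta_{n,1}\cdot\zeta_{n,1}^{s-1}\le K^{s-1}\zeta_{n,1}$ is not good enough, since it would introduce the factor $\prod_{n}K^{s-1}=\infty$; the splitting above is what makes the argument work.

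To finish, I would use that for nonnegative terms the infinite product $\prod_{n}(1+a_{n})$ converges if and only if $\sum_{n}a_{n}<\infty$. Applying this with $a_{n}=(K^{s}-1)q_{n}$ and invoking \eqref{BPVE::q_n_conv}, which is a consequence of the fading condition \eqref{BPVE::fad_cond}, we get $\sum_{n}(K^{s}-1)q_{n}=(K^{s}-1)\sum_{n}q_{n}<\infty$, whence $\prod_{n=0}^{\infty}\mathbb{E}\zeta_{n,1}^{s}\le\prod_{n=0}^{\infty}\bigl(1+(K^{s}-1)q_{n}\bigr)<\infty$. Proposition \ref{BPVE::Z_moments} now applies with this $s$ and gives $\mathbb{E}Z^{s}<\infty$; letting $s\to\infty$ and using the Lyapunov monotonicity noted above yields \eqref{c1::goal} for every $s>0$. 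There is no real obstacle in this proof — it is a short computation — the only points requiring a modicum of care are the choice of the right splitting (rather than the wasteful power bound) and the routine reduction from $s>1$ to all $s>0$.
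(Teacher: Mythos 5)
Your proof is correct and follows essentially the same route as the paper: bound $\mathbb{E}\zeta_{n,1}^{s}\le 1+(K^{s}-1)q_{n}$ by splitting on $\{\zeta_{n,1}=1\}$, use $\sum_{n}q_{n}<\infty$ to get convergence of the product, and invoke Proposition \ref{BPVE::Z_moments}. Your explicit reduction from $s>1$ to all $s>0$ via Lyapunov's inequality is a small point of care that the paper's one-line proof leaves implicit, but otherwise the arguments coincide.
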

		\begin{proof}[Proof]
			It is clear that, for any $ s > 0 $,
			\begin{align*}
				\prod_{n=0}^{\infty} \mathbb{E} \zeta_{n,1}^{s} \le \prod_{n=0}^{\infty} (1 + (K^{s}-1)q_{n}).
			\end{align*}
			The latter product is finite since the series  $ \sum_{n=0}^{\infty} q_{n} $ converges. 
			This and Proposition \ref{BPVE::Z_moments} imply \eqref{c1::goal}.
		\end{proof}

		Provide now a simple corollary from Theorem \ref{MR::th2} in the case where  $ \mu $ does not depend both on the branching process $ Z_{n} $ and on the increments of the BRW. In this particular case, we may determine the mean $ \mathbb{E} \eta_{\mu} $.
		\begin{corollary} \label{Examples::c2}
		Let conditions  \eqref{BPVE::non_ext}, \eqref{BPVE::finite_mean}, \eqref{BRW::zero_mean} and \eqref{BRW::indep} hold and assume that $ \mu $ does not depend on the $ \sigma $-algebra $ \sigma(\zeta_{n, j}, \xi_{i,k}; \ j,n,k,i \ge 0) $. Suppose that $ F \in \mathcal{S^{*}} $. 
		Then, for any $ c > 0 $,
		\begin{align*}
			\mathbb{P} \left( R^{\widehat{c}}_{\mu} > x \right) \sim \mathbb{E} \eta_{\mu} \overline{F} (x).
		\end{align*}
		Here 
		\begin{align*}
			\mathbb{E} \eta_{\mu} = \sum_{n=1}^{\infty} \mathbb{E}{Z_{n}} \mathbb{P} \left( \mu \ge n\right).
		\end{align*}
	\end{corollary}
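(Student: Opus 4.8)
The plan is to read the corollary off Theorem~\ref{MR::th2}: under the present, much stronger, assumption that $\mu$ is independent of everything, the hypotheses of that theorem become essentially trivial to verify, and the only real work is to put $H_\mu^{\widehat{c}}(x;\widehat{\mathcal{P}})$ into closed form and extract its tail asymptotics.

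First I would check the hypotheses of Theorem~\ref{MR::th2}. Conditions \eqref{BPVE::non_ext}, \eqref{BRW::zero_mean} and \eqref{BRW::indep} are assumed; \eqref{BPVE::finite_mean} ensures $\mathbb{E}Z_n=\prod_{k=0}^{n-1}\mathbb{E}\zeta_{k,1}<\infty$ for every fixed $n$; and since $\mu$ is independent of the full $\sigma$-algebra $\sigma(\zeta_{n,j},\xi_{i,k})$ it is, in particular, independent of the increments $\{\xi_{i,k}\}$, so condition (HM) holds. For the moment hypothesis, note that \eqref{BPVE::non_ext} (applied to all $\zeta_{n,j}$, which share the law $\mathcal{P}_n$) gives $Z_{n+1}=\sum_{j=1}^{Z_n}\zeta_{n,j}\ge Z_n$ a.s., so $\{Z_n\}$ is non-decreasing and $\eta_\mu=\sum_{n=1}^{\mu}Z_n\le\mu Z_\mu$; hence the assumption $\mathbb{E}(\mu Z_\mu)<\infty$ carried over from Theorem~\ref{MR::th2} is in force and also forces $\mathbb{E}\eta_\mu<\infty$. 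Theorem~\ref{MR::th2} then yields $\mathbb{P}(R_\mu^{\widehat{c}}>x)\sim H_\mu^{\widehat{c}}(x;\widehat{\mathcal{P}})$.

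Next I would evaluate $H_\mu^{\widehat{c}}(x;\widehat{\mathcal{P}})$. Independence of $\mu$ and $\sigma(\zeta_{n,j})$ gives $\mathbb{E}[Z_n\mathbb{I}(\mu\ge n)]=\mathbb{E}Z_n\cdot\mathbb{P}(\mu\ge n)$ for each $n\ge1$, and a Tonelli interchange (all terms nonnegative) both identifies the constant in the statement, $\mathbb{E}\eta_\mu=\mathbb{E}\sum_{n=1}^{\mu}Z_n=\sum_{n\ge1}\mathbb{E}Z_n\,\mathbb{P}(\mu\ge n)$, and rewrites $H_\mu^{\widehat{c}}(x;\widehat{\mathcal{P}})=\sum_{n\ge1}\mathbb{E}Z_n\,\mathbb{P}(\mu\ge n)\,\overline{F}(x+cn)$. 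Since $F\in\mathcal{S}^{*}\subset\mathcal{L}$, for every fixed $n$ we have $\overline{F}(x+cn)/\overline{F}(x)\to1$ as $x\to\infty$, while this ratio stays bounded by $1$ by monotonicity of $\overline{F}$; as the weights $\mathbb{E}Z_n\,\mathbb{P}(\mu\ge n)$ sum to $\mathbb{E}\eta_\mu<\infty$, dominated convergence yields $H_\mu^{\widehat{c}}(x;\widehat{\mathcal{P}})\sim\mathbb{E}\eta_\mu\,\overline{F}(x)$. Combined with the equivalence of the previous step, this is the assertion of the corollary.

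I do not expect a real obstacle here — the corollary merely repackages Theorem~\ref{MR::th2} for a $\mu$ independent of the branching mechanism — the one point that wants a little care being the dominated-convergence passage, whose domination is exactly the finiteness of $\mathbb{E}\eta_\mu$, itself a consequence (via $\eta_\mu\le\mu Z_\mu$) of the moment hypothesis of Theorem~\ref{MR::th2}.
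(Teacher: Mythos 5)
Your argument is correct and is exactly the route the paper intends: the corollary is stated as an immediate consequence of Theorem~\ref{MR::th2}, with the independence of $\mu$ from $\sigma(\zeta_{n,j},\xi_{i,k})$ giving condition (HM) and the factorisation $\mathbb{E}[Z_n\mathbb{I}(\mu\ge n)]=\mathbb{E}Z_n\,\mathbb{P}(\mu\ge n)$, hence $\mathbb{E}\eta_\mu=\sum_{n\ge1}\mathbb{E}Z_n\,\mathbb{P}(\mu\ge n)$. You are also right to flag that the moment hypothesis $\mathbb{E}(\mu Z_\mu)<\infty$ must be carried over from Theorem~\ref{MR::th2} (the paper omits it from the corollary's statement, but without it the conclusion fails, as Example~\ref{Examples::ex2} shows), and your observation that \eqref{BPVE::non_ext} makes $Z_n$ non-decreasing, so $\eta_\mu\le\mu Z_\mu$, correctly secures $\mathbb{E}\eta_\mu<\infty$.
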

	
	The next corollary is close in spirit to Remark \ref{MR::r1}. We consider conditions under which the equivalence \eqref{Intro::goal_non_un2} continues to hold if the drift of the BRW is positive, i.e. for  $ c > 0 $. 
	\begin{corollary} \label{Examples::htail}
	Let conditions \eqref{BPVE::non_ext}, \eqref{BPVE::finite_mean}, \eqref{BRW::indep}, \eqref{BRW::zero_mean} hold and let $ \mu $ satisfy condition (HM). Further, let $ \mathbb{E}{\mu Z_{\mu}} < \infty $, $ F \in \mathcal{S^{*}} $, and assume that, for a function $ h(x) $ such that 
	$ F $ is $h$-insensitive, we have 
	\begin{align*}
		\mathbb{P} \left( \mu > h(x) \right) = o \left( \overline{F} (x) \right).
	\end{align*}
	Then, for any $ c \in \mathbb{R} $,
	\begin{align} \label{htail::goal}
		\mathbb{P} \left( R^{\widehat{c}}_{\mu} > x \right) \sim \mathbb{E} \eta_{\mu} \cdot \overline{F} (x).
	\end{align}
	\end{corollary}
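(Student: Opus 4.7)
The plan is to reduce the case of arbitrary $c \in \mathbb{R}$ to the case $c > 0$ already handled by Theorem \ref{MR::th2}, by a pathwise sandwich in the shift parameter together with the hypothesis $\mathbb{P}(\mu > h(x)) = o(\overline{F}(x))$ and the $h$-insensitivity of $F$. For $c > 0$ the statement is exactly Theorem \ref{MR::th2} (the two extra hypotheses are not needed), so I may and will restrict attention to $c \le 0$; throughout the argument I also fix an auxiliary $c' > 0$. The key pathwise observation is that $c \mapsto R^{\widehat{c}}_\mu = \max_{|\pi| \le \mu}(S(\pi) - c|\pi|)$ is non-increasing in $c$, since $|\pi| \ge 0$.

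For the lower bound this monotonicity gives $R^{\widehat{c}}_\mu \ge R^{\widehat{c'}}_\mu$ pathwise, so Theorem \ref{MR::th2} applied to $c'$ yields
\[
\liminf_{x\to\infty} \frac{\mathbb{P}(R^{\widehat{c}}_\mu > x)}{\overline{F}(x)} \ge \mathbb{E}\eta_\mu.
\]

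For the upper bound I would split
\[
\mathbb{P}(R^{\widehat{c}}_\mu > x) \le \mathbb{P}(\mu > h(x)) + \mathbb{P}\bigl(R^{\widehat{c}}_\mu > x,\ \mu \le h(x)\bigr),
\]
the first summand being $o(\overline{F}(x))$ by hypothesis. On $\{\mu \le h(x)\}$ the identity $S(\pi) - c|\pi| = (S(\pi) - c'|\pi|) + (c' + |c|) |\pi|$ together with $|\pi| \le \mu \le h(x)$ yields pathwise $R^{\widehat{c}}_\mu \le R^{\widehat{c'}}_\mu + (c' + |c|) h(x)$, and hence
\[
\mathbb{P}\bigl(R^{\widehat{c}}_\mu > x,\ \mu \le h(x)\bigr) \le \mathbb{P}\bigl(R^{\widehat{c'}}_\mu > x - (c' + |c|) h(x)\bigr).
\]
Theorem \ref{MR::th2} applied at the shifted argument, combined with the fact that $F$ is $(Ch)$-insensitive for every constant $C > 0$ (a standard consequence of $h$-insensitivity for long-tailed distributions, see the footnote in Section \ref{Intro}), bounds the right-hand side above by $(1 + o(1)) \mathbb{E}\eta_\mu \overline{F}(x)$. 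Combining with the lower bound yields \eqref{htail::goal}.

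The only genuinely non-trivial ingredient is the absorption of the shift $(c' + |c|) h(x)$ by the long-tail behaviour of $F$, which is exactly what the $h$-insensitivity hypothesis is designed to supply; the rest is pathwise monotonicity in $c$ and a routine probability split on $\{\mu \le h(x)\}$. I do not anticipate any further technical obstacle.
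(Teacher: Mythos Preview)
Your argument is correct and follows essentially the same route as the paper: the upper bound is identical (shift the drift to a positive value, split on $\{\mu\le h(x)\}$, and absorb the $O(h(x))$ correction by $h$-insensitivity), while for the lower bound the paper truncates $\mu$ and invokes Remark~\ref{MR::r1}, whereas your pathwise monotonicity $R^{\widehat{c}}_\mu \ge R^{\widehat{c'}}_\mu$ combined with Theorem~\ref{MR::th2} is a slightly more direct way to reach the same conclusion.
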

	\begin{proof}[Proof]
		Note that \eqref{htail::goal} follows directly from \ref{MR::th2} if $c>0$. Assume now $ c \le 0 $. Take any $ n \ge 1 $ and let $ \mu_{n} = \mu \wedge n $.
		
		Then, by Remark \ref{MR::r1},
		\begin{align} \label{htail::temp1}
			\mathbb{P} \left( R^{\widehat{c}}_{\mu_{n}}> x \right) \sim \mathbb{E} \eta_{\mu_{n}} \overline{F} (x).
		\end{align}
		Since $ R_{\mu}^{\widehat{c}} \ge R_{\mu_{n}}^{\widehat{c}}$ for al $ n $, the desired lower bound follows from  \eqref{htail::temp1}.
		
		Further, take an arbitrary $ a > |c| $. Then, for any $ n \ge 1 $,
		\begin{align*}
			R^{\widehat{c}}_{n} \le R^{\widehat{c}+\widehat{a}}_{n} + an 
		\end{align*}
		and, therefore, 
		\begin{align} \label{htail::temp2}
			\mathbb{P} \left( R^{\widehat{c}}_{\mu}>x \right) & \le \mathbb{P} \left( R^{\widehat{c} + \widehat{a}}_{\mu} + a \mu >x \right) \nonumber \\ 
			&= \mathbb{P} \left( R^{\widehat{a} + \widehat{c}}_{\mu} + a \mu >x, \mu < h(x) \right) + o(\overline{F}(x)) \nonumber \\
			& \le \mathbb{P} \left( R_{\mu}^{\widehat{a+c}}>x - ah(x) \right) + o(\overline{F}(x)).
		\end{align}
		
		Since $ a + c >0 $, by Theorem \ref{MR::th2}, 
		\begin{align} \label{htail::temp3}
			\mathbb{P} \left( R^{\widehat{a+c}}_{\mu}>x-ah(x) \right) \sim \mathbb{E} \eta_{\mu} \overline{F}(x-ah(x)) \sim \mathbb{E} \eta_{\mu} \overline{F} (x),
		\end{align}
		where the latter equivalence follows from the $ h $-insensitivity of distribution $ F $.
		
		Finally, the desired upper bound follows from \eqref{htail::temp2} and \eqref{htail::temp3}. 
	\end{proof}
	
	Consider now the case where  $ \mu $ is the fading time of the branching process,
	\begin{example} \label{Examples::ex1}
		Let conditions \eqref{BPVE::non_ext} \eqref{BPVE::finite_mean}, \eqref{BRW::indep} and  \eqref{BRW::zero_mean} hold. Assume that $ F \in \mathcal{S}^{*} $. Assume that $ \mu $ coincides with the fading time $ \nu $. Then, by Theorem \ref{MR::th2}, for any $ c > 0 $, 
		\begin{align} \label{ex1::temp1}
			\mathbb{P} \left( R_{\nu}^{\widehat{c}} > x \right) \sim \mathbb{E} \eta_{\nu} \overline{F}(x),
		\end{align}
		if we assume that  
		\begin{align} \label{ex1::temp2} 
			\mathbb{E} \nu Z_{\nu} \equiv \mathbb{E} \nu Z < \infty.
		\end{align}
		
		By the Holder inequality, \eqref{ex1::temp2} follows from the finiteness of the power moments $ \mathbb{E} \nu^{r} $ and $ \mathbb{E} Z^{s} $ for $ r,s > 0$ such that $ 1/r+1/s=1 $. In turn, Propositions \ref{BPVE::nu_moments} and \ref{BPVE::Z_moments} provide sufficient (and also neccessary for $ \mathbb{E} \nu^{r} $) conditions for the finiteness of these power moments. For example, in the case where
		\begin{align*}
			\sum_{n=1}^{\infty} n^{r} q_{n} < \infty
		\end{align*}
		and \eqref{c1::bound} holds, the equivalence \eqref{ex1::temp1} takes place too. 
	\end{example}

	Here is a simple example of the so-called ``intermediate'' asymptotics  of the tail distribution of interest, i.e. where the tail $ \mathbb{P} \left( R_{\mu}^{g} > x \right) $  appears to be heavier than $ \overline{F} (x) $ but lighter than $ \overline{F}_{I} (x) $.
	\begin{example} \label{Examples::ex2}
		 Assume that $ \zeta_{0,1} = 2 $ a.s. and $ \zeta_{1,1} = \zeta_{2,1} = \zeta_{3,1} = \ldots = 1 $ a.s. Here $ \nu = 1 $ and $ Z = 2 $. Assume also that conditions  \eqref{BRW::zero_mean} and \eqref{BRW::indep} hold and that
			$ \overline{F} (x) \sim K_{1} x^{-\beta}$, 
		for some $ \beta > 1 $.
		
		Consider a random time $ \mu $ that satisfies condition (HM) and has the distribution tail $\mathbb{P} \left( \mu \ge n \right) \sim K_{2} n^{-\alpha} $, $ n \to \infty $ for some $ \alpha \in (0,1) $ (then, in particular,  $ \mathbb{E} \mu = \infty$).
		Then, for any $ c > 0 $,
		\begin{align}
			\mathbb{P} \left( R_{\mu}^{\widehat{c}} > x \right) & \sim  \mathbb{P} \left( M_{1, \mu}^{\widehat{c}} > x \right) + \mathbb{P} \left( M_{2, \mu}^{\widehat{c}} > x \right) \nonumber \\
			& \sim 2 H_{\mu}^{\widehat{c}} (x). \label{ex2::temp2}
		\end{align}
		Here $ M_{i, \mu}^{\widehat{c}} = \max_{\pi \subseteq \pi_{i, \mu}} S^{\widehat{c}}(\pi) $, where $ \pi_{1, \mu} $ and $ \pi_{2, \mu} $ are different paths of length $ \mu $ in the genealogical tree of the branching process and $ \pi $ a path that starts from its root. The last equivalence  \eqref{ex2::temp2} follows from \eqref{Intro::old_th2}.
		
		Further, 
		\begin{align}
			H_{\mu}^{\widehat{c}} (x) &= \sum_{n=1}^{\infty} \mathbb{P} \left( \mu \ge n \right) \overline{F} (x + cn) \sim K_{1} K_{2}  \sum_{n=1}^{\infty} n^{-\alpha} (x+cn)^{-\beta} \nonumber \\
			& \sim K_{1} K_{2}  \int_{0}^{\infty} t^{-\alpha} (x+ct)^{-\beta} dt \label{ex2::temp1} \\
			& \sim C x^{1- \alpha - \beta}, \nonumber
		\end{align}
	where $ C = K_1 K_2 \mathbb{E}Z c^{\alpha-1} B (1-\alpha, \beta + \alpha - 1) $ and the equivalence  \eqref{ex2::temp1} holds since condition $ \mathbb{E} \mu = \infty $ implies that any finite number of summands in the presentation for function  $ H_{\mu}^{g} (x) $ does not bring an essential contribution as $ x \to \infty $, i.e. $ \sum_{n=1}^{N} n^{-\alpha} (x+cn)^{-\beta} = O(x^{-\beta}) = o(x^{1-\alpha-\beta}) $, for any finite $ N > 1 $.
	
	Therefore,
	\begin{align*}
		\mathbb{P} \left( R_{\mu}^{\widehat{c}} > x \right) \sim 2C x^{1-\alpha-\beta}
	\end{align*}
	and, in particular, $ \overline{F} (x) = o(\mathbb{P} ( R_{\mu}^{\widehat{c}} > x)) $ and 
	$\mathbb{P} ( R_{\mu}^{\widehat{c}} > x) = o( \overline{F^I} (x)) $ as $x\to\infty$.
	\end{example}
	
	\section{Proofs of the main results} \label{proofs}
	
	\begin{proof}[Proof of Theorem \ref{MR::th1}]
		Our branching process includes $Z$ infinite paths that start from the root of the genealogical tree $ \mathcal{T} $. We enumerate these paths arbitrarily and, for  $ i = 1, 2, \ldots, Z $, denote by $ S_{i,n}, n \ge 0 $  the random walk along the $i$'th path and by $ M_{i}^{\widehat{c}} $ the maximum of the corresponding $\widehat{c}$-shifted random walk on the infinite time horizon. Then $ R_{\infty}^{\widehat{c}} = \max_{1 \le i \le Z}  M_{i}^{\widehat{c}} $ and
		\begin{align} \label{th1::temp1}
			\mathbb{P} \left( R_{\infty}^{\widehat{c}} > x \right) &= \mathbb{P} \left(\max_{1 \le i \le Z} M_{i}^{\widehat{c}} > x \right) = \sum_{m=1}^{\infty} \mathbb{P} \left( \max_{1 \le i \le m} M_{i}^{\widehat{c}} > x, Z = m \right) \nonumber \\
			& \le \sum_{m=1}^{\infty} m \mathbb{P} \left( M_{1}^{\widehat{c}} > x \right) \mathbb{P} \left( Z = m \right) = \mathbb{E} Z \cdot \mathbb{P} \left(M_{1}^{\widehat{c}} > x \right).
		\end{align}
		
		Then \eqref{th1::temp1} and \eqref{Intro::Veraver_res} imply that 
		\begin{align} \label{th1::temp2}
			\limsup_{x \to \infty} \frac{1}{\overline{F}_{I}(x)} \cdot \mathbb{P} \left( R_{\infty}^{\widehat{c}} > x \right) \le \frac{\mathbb{E} Z}{c}.
		\end{align}
		
		We obtain now a similar to \eqref{th1::temp2} lower bound. Introduce new random walks 
		 $S_{i,n}^{\prime}$ using the following time shift: for any  $ i = 1, \dots, Z $, we let 
		\begin{align*}
			S_{i,n}^{\nu} = S_{i, \nu + n} - S_{i, \nu}, \quad n \ge 0.
		\end{align*}
		Introduce also
		\begin{align*}
			S_{i, n}^{\widehat{c}, \nu} = S_{i,n}^{\nu} - cn, \qquad n \ge 0,
		\end{align*}
		the corresponding family of $\widehat{c}$-shifted random walks, and let 
		\begin{align*}
			M_{i}^{\widehat{c}, \nu} = \sup_{n \ge 0} S_{i,n}^{\widehat{c}, \nu}.
		\end{align*}
		
		Then, for any integer $N$, 
		\begin{align*}
			\mathbb{P} \left( R_{\infty}^{\widehat{c}}>x \right) & \ge \mathbb{P} \left( \max_{1 \le i \le Z} M_{i}^{\widehat{c}, \nu} > x + H(x), l_{\nu}^{\widehat{c}}> - H(x)\right) \nonumber \\
			&\ge \sum_{n,m=0}^{N} \mathbb{P} \left(\max_{1 \le i \le m} M_{i}^{\widehat{c}, n} > x + H(x), l_{n}^{\widehat{c}} > - H(x) \right) \mathbb{P} \left( \nu = n, Z = m \right),
		\end{align*}
		where we assume that the distribution $ F_{I} $ is $ H $-insensitive (note that such a function $H$ exists since $ \mathcal{S} \subset \mathcal{L} $) and where $ l_{\nu}^{\widehat{c}} = \inf_{\pi : |\pi| = \nu} S^{\widehat{c}}(\pi) $ is the leftmost point of the BRW in the $\nu$'th generation.
		
		For all $ 0 \le n \le N $ and $ 1 \le m \le N $ the random vector $ (M_{1}^{\widehat{c}, n}, \ldots, M_{m}^{\widehat{c}, n}) $ does not depend on  $ l_{n}^{\widehat{c}} $ and, therefore,
		\begin{align*}
		 	\mathbb{P} \left( R_{\infty}^{\widehat{c}}>x \right) & \ge \sum_{n,m=0}^{N} \mathbb{P} \left(\max_{1 \le i \le m} M_{i}^{\widehat{c}, n} > x + H(x) \right) \mathbb{P} \left(l_{n}^{\widehat{c}} > - H(x) \right) \mathbb{P} \left( \nu = n, Z = m \right) \nonumber
			\\ & \ge  \mathbb{P} \left( l_{N}^{\widehat{c}} > - H(x) \right) \sum_{n,m=0}^{N} \mathbb{P} \left( \max_{1 \le i \le m} M_{i}^{\widehat{c}, n} > x + H(x) \right) \mathbb{P} \left( \nu = n, Z = m \right) \nonumber
			\\ & = (1+o(1)) \sum_{n,m=0}^{N} \mathbb{P} \left( \max_{1 \le i \le m} M_{i}^{\widehat{c}, n} > x + H(x) \right) \mathbb{P} \left( \nu = n, Z = m \right).
		\end{align*}
		
		Since the r.v.'s $ M_{1}^{\widehat{c}, n}, \ldots, M_{m}^{\widehat{c}, n} $ are mutually independent and have a common subexponential distribution,
		\begin{align*}
			\mathbb{P} \left( \max_{1 \le i \le m} M_{i}^{\widehat{c}, n} > x + H(x) \right) \sim m \mathbb{P} \left( M_{1}^{\widehat{c}, n} > x + H(x) \right) \sim \frac{m}{c} \cdot \overline{F}_{I} (x),
		\end{align*}
		for all $ m=1,\ldots,N $, where the  latter equivalence follows from the  $ H $-insensitivity of function $ \overline{F}_{I} (x) $ and from \eqref{Intro::Veraver_res}.
		Therefore,
		\begin{align*}
			\liminf_{x \to \infty} \frac{1}{\overline{F}_{I}(x)} \mathbb{P} \left( R_{\infty}^{\widehat{c}}>x \right) \ge \frac{1}{c} \sum_{n,m=0}^{N} m \mathbb{P} \left( \nu = n, Z = m \right),
		\end{align*}
		for all $ N \ge 1 $. Letting $ N $ tend to infinity, we get 
		\begin{align} \label{th1::temp3}
			\liminf_{x \to \infty} \frac{1}{\overline{F}_{I}(x)} \mathbb{P} \left( R_{\infty}^{\widehat{c}}>x \right) \ge \frac{\mathbb{E} Z}{c}.
		\end{align}
		Inequalities \eqref{th1::temp2} and \eqref{th1::temp3} lead to the desired result.
		
		Theorem \ref{MR::th1} is proved.
	\end{proof}

	\begin{proof}[Proof of Theorem \ref{MR::th2}]
			The total probability formula leads to 
			\begin{align*}
				\frac{1}{\overline{F}(x)} \mathbb{P} \left( R^{\widehat{c}}_{\mu} > x \right) = \sum_{n,m,k=1}^{\infty} \frac{1}{\overline{F}(x)} \mathbb{P} \left(R^{\widehat{c}}_{\mu} > x, \mu = n, Z_{\mu} = m, \eta_{\mu} = k \right)
			\end{align*}
			
			Consider any of the summands on the right-hald side. Let 
			\begin{align*}
				S_{\eta_{\mu}}^{+} \equiv S_{\eta_{\mu}}^{+} (\mathcal{T})  = \sum_{e \in \mathcal{E}_{\mu}} \xi_{e}^{+},
			\end{align*}
			where the set $ \mathcal{E}_{\mu} = \{ e_{1}, \ldots, e_{\eta_{\mu}} \} $ consists of the edges that connect vertices of generations  $n\le \mu $. Clearly, $ R^{\widehat{c}}_{\mu} \le S_{\eta_{\mu}}^{+} $. Then 
			\begin{align} \label{th2::temp1}
				\mathbb{P} \left(R^{\widehat{c}}_{\mu} > x, \mu = n, Z_{\mu} = m, \eta_\mu = k \right) &\le \mathbb{P} \left( 
				S_{\eta_\mu}^{+} > x, \mu = n, Z_{\mu} = m, \eta_\mu = k \right) \nonumber \\
				& = \mathbb{P} \left( S_{k}^{+} > x \right) \mathbb{P} \left( \mu = n, Z_{\mu} = m, \eta_\mu = k 
				\right) \nonumber \\
				& \sim  k \overline{F} (x) \cdot \mathbb{P} \left( \mu = n, Z_{\mu} = m, \eta_\mu = k \right),
			\end{align}
			where the equivalence above follows from  $ F \in \mathcal{S} $. 
			
			For the lower bound, we need again a function $h(x)\uparrow \infty$ such that $F$ is
			$h$-insensitive. For $ i = 1, \ldots, \eta_\mu $,  let
			\begin{align*}
			A_{i} (x, \eta_\mu) = \{ \xi_{e_{i}} -c > x + \eta_\mu h(x) \} \cap \bigcap_{j \neq i} \{ - h(x) < \xi_{e_{j}} - c < h(x) \}.
			\end{align*}
			Then 
			\begin{align} \label{th2::temp2}
				\mathbb{P} \left( R^{\widehat{c}}_{\mu} > x, \mu = n, Z_{\mu} = m, \eta_{\mu} = k \right) &\ge  \mathbb{P} \left( \bigcup_{i=1}^{\eta_{\mu}} A_{i}(x, \eta_\mu), \mu = n, Z_{\mu} = m, \eta_{\mu} = k \right) \nonumber \\
				& =  \mathbb{P} \left( \bigcup_{i=1}^{k} A_{i}(x, k), \mu = n, Z_{\mu} = m, \eta_{\mu} = k \right) \nonumber \\
				& =  \sum_{i=1}^{k} \mathbb{P} \left( A_{i}(x,k) \right) \mathbb{P} \left( \mu = n, Z_{\mu} = m, \eta_{\mu} = k \right) \nonumber \\
				& \sim k \overline{F} (x) \cdot \mathbb{P} \left( \mu = n, Z_{\mu} = m, \eta_{\mu} = k \right),
			\end{align}
			where the latter equivalence follows from the $ h $-insensitivity of the function $ \overline{F} (x) $ and from equalities $ \mathbb{P} \left( A_{i}(x,k) \right) = \overline{F} (x+kh(x)+c) (\mathbb{P} \left( |\xi-c|<h(x)\right))^{k-1} $, for all $ i=1,\ldots,k $.
			
			It follows from \eqref{th2::temp1} and \eqref{th2::temp2} that, for any $ n \ge 1, m \ge 1, k \ge 1 $, the following equivalence holds: 
			\begin{align*}
				\mathbb{P} \left( R^{\widehat{c}}_{\mu} > x, \mu = n, Z_{\mu} = m, \eta_\mu = k \right) \sim k \overline{F} (x) \cdot \mathbb{P} \left( \mu = n, Z_{\mu} = m, \eta_\mu = k \right).
			\end{align*}
			
			Denote by $ \pi_{i}, \ i=1, 2, \ldots, Z_{\mu} $ all paths of length $ \mu $ that start from the root of the genealogical tree, and let
			\begin{align*}
				M^{\widehat{c}}_{i, \mu} = \text{sup}_{i} S^{\widehat{c}}(\pi),
			\end{align*}
			where the $ i $'th supremum is taken over all sub-paths $ \pi $ of path $ \pi_{i} $ that start from the root. Then 
			\begin{align} \label{th2::temp3}
				\mathbb{P} \left( R^{\widehat{c}}_{\mu} > x, \mu = n, Z_{\mu} = m, \eta_\mu = k \right) & = \mathbb{P} \left(\max_{1 \le i \le Z_{\mu}} M^{\widehat{c}}_{i, \mu} > x, \mu = n, Z_{\mu} = m, \eta_\mu = k \right) \nonumber \\
				& = \mathbb{P} \left( \max_{1 \le i \le m} M^{\widehat{c}}_{i, n} > x, \mu = n, Z_{\mu} = m, \eta_\mu = k \right) \nonumber \\
				& \le  m \mathbb{P} \left(  M^{\widehat{c}}_{1, n} > x \right) \mathbb{P} \left( \mu = n, Z_{\mu} = m, \eta_\mu = k\right).
			\end{align}
			
			By \cite[
			Theorem 2]{DenFossKor2010},
			\begin{align} \label{th2::temp4}
				\mathbb{P} \left(M^{\widehat{c}}_{1,n} > x \right) \le C n \overline{F} (x), \text{for all} x>0, n \ge 1. 
			\end{align}
			
			Using \eqref{th2::temp4}, we may continue the chain of inequalities  \eqref{th2::temp3} as follows:
			\begin{align*}
				\frac{1}{\overline{F}(x)} \mathbb{P} \left( R^{\widehat{c}}_{\mu} > x, \mu = n, Z_{\mu} = m, \eta_\mu = k \right) \le C \cdot m \cdot n \cdot \mathbb{P} \left( \mu = n, Z_{\mu} = m, \eta_\mu = k \right).
			\end{align*}
			Since
			\begin{align*}
				\sum_{n,m,k=1}^{\infty} m n \mathbb{P} \left( \mu = n, Z_{\mu} = m, \eta_\mu = k \right) = \mathbb{E}\mu Z_{\mu} < \infty,
			\end{align*}
			we can apply the Lebesgue's dominated convergence theorem to get
			\begin{align*}
				&\lim_{x \to \infty} \sum_{n,m,k=1}^{\infty}  \frac{1}{\overline{F}(x)} \mathbb{P} \left( R^{\widehat{c}}_{\mu} > x, \mu = n, Z_{\mu} = m, \eta_\mu = k \right) \\ =& \sum_{n,m,k=1}^{\infty}  k \mathbb{P} \left( \mu = n, Z_{\mu} = m, \eta_\mu = k \right) = \mathbb{E} \eta_\mu.
			\end{align*}
			Since $ \eta_\mu \le \mu Z_{\mu} $ a.s., we have $ \mathbb{E} \eta_\mu < \infty $.
			Therefore,
			\begin{align*}
				\mathbb{P} \left( R^{\widehat{c}}_{\mu} > x \right) \sim \mathbb{E} \eta_\mu \overline{F} (x).
			\end{align*}
			
			Theorem \ref{MR::th2} is proved.
	\end{proof}

	\begin{proof}[Proof of Theorem \ref{MR::th3}]
		Let
		\begin{align*}
			r_{n}^{g} = \max_{\pi: |\pi|=n} S^{g}(\pi), \ \ l_{n}^{g} = \min_{\pi: |\pi|=n} S^{g}(\pi)
		\end{align*}
		and
		\begin{align*}
			\tau^{g}(x) = \inf \{ n \ge 1: r_{n}^{g} > x \}\le \infty.
		\end{align*}
		
		Let a function $ h(x) \to \infty $ be such that the distribution $ F $ is $ h $-insensitive. Then, for any $ \mu \in \mathcal{F}_{N}(\mathcal{T}) $, 
		\begin{align} \label{th3::decomp}
			\mathbb{P} \left( R_{\mu}^{g} > x \right) &= \mathbb{P} \left( \mu \ge \tau^{g}(x) \right) = \sum_{n=1}^{N} \mathbb{P} \left( \mu \ge n, r_{n}^{g}>x, R^{g}_{n-1} \le x \right) \nonumber \\
			&= \sum_{n=1}^{N} \mathbb{P} \left( \mu \ge n, r_{n}^{g} > x, -h(x+g(n-1)) \le l_{n-1}, r_{n-1} \le h(x+g(n-1)), R_{n-2}^{g} \le x \right) \nonumber \\
			&+ \sum_{n=1}^{N} \mathbb{P} \left( \mu \ge n, r_{n}^{g} > x, l_{n-1} < -h(x+g(n-1)), r_{n-1} \le h(x+g(n-1)), R_{n-2}^{g} \le x \right) \nonumber \\
			& + \sum_{n=1}^{N} \mathbb{P} \left( \mu \ge n, r_{n}^{g}>x, r_{n-1} > h(x+g(n-1)), R_{n-2}^{g} \le x\right) \nonumber \\
			&=: \sum_{n=1}^{N} a_{\mu,1}^{g}(x,n) + \sum_{n=1}^{N} a_{\mu,2}^{g}(x,n) + \sum_{n=1}^{N} a_{\mu,3}^{g}(x,n).
		\end{align}
		
		We start with the proof of the first statement of the theorem. Since $ \mu $ does not depend on the future, 
		\begin{align*}
			a_{\mu,1}^{g}(x,n) \ge \sum_{k=1}^{K} \mathbb{P} \left( A_{\mu}^{g}(n,k,x) \right) [ 1 - (1- \overline{F} (x+g(n)+h(x+g(n-1))))^{k} ]
		\end{align*}
		for all integer $ K \ge 1 $, where
		\begin{align*}
			A_{\mu}^{g}(n,k,x) = \{ \mu \ge n, Z_{n} = k, -h(x+g(n-1)) \le l_{n-1}, r_{n-1} \le h(x+g(n-1)), R_{n-2}^{g} \le x \}.
		\end{align*}
		
		Since for $ x \to \infty $
		\begin{align*}
			\mathbb{P} \left( R_{n-2}^{g} > x \right) \le \mathbb{P} \left( R_{n-2} > x \right) \to 0
		\end{align*}
		and
		\begin{align*}
			\mathbb{P} \left( l_{n-1} < -h(x+g(n-1)) \right) &+ \mathbb{P} \left(r_{n-1} > h(x + g(n-1)) \right) \\ &\le \mathbb{P} \left( l_{n-1} < - h(x) \right) + \mathbb{P} \left( r_{n-1} > h(x) \right) \to 0,
		\end{align*}
		we get that 
		\begin{align} \label{th3::lb1}
			\mathbb{P} \left( A_{\mu}^{g}(n,k,x) \right) = \mathbb{P} \left( \mu \ge n, Z_{n} = k \right) + o(1)
		\end{align}
		uniformly in all $ g \in \mathcal{G}_{0} $ and $ \mu \in \mathcal{F}_{N} (\mathcal{T}) $.
		
		Since the distribution $ F $ is $ h $-insensitive,
		\begin{align} \label{th3::mon_un_eq}
			\sup_{y\ge x} \left| \frac{\overline{F}(y+h(y))}{\overline{F}(y)} - 1 \right| \to 0
		\end{align}
		as $ x \to \infty $. Then it follows from \eqref{th3::mon_un_eq} and from the inequality  $g(n-1)\le g(n)$ that 
		\begin{align} \label{th3::kr}
			\overline{F} (x+g(n) \pm h(x+g(n-1))) = (1+o(1)) \overline{F} (x+g(n))
		\end{align}
		uniformly in all $ g \in \mathcal{G}_{0} $.
		
		Since $ 1 - (1 - \overline{F}(x))^k \sim k \overline{F} (x) $ and $ g \in \mathcal{G}_{0} $, it follows from \eqref{th3::kr} that 
		\begin{align} \label{th3::lb2}
			1 - (1- \overline{F} (x+g(n)+h(x+g(n-1))))^{k} = (1 + o(1)) k \overline{F} (x + g(n))
		\end{align}
		uniformly in all $ g \in \mathcal{G}_{0} $.
		
		From \eqref{th3::lb1} and \eqref{th3::lb2},
		\begin{align*}
			\sum_{n=1}^{N}a_{\mu,1}^{g}(x,n) &\ge (1+o(1)) \sum_{n=1}^{N} \sum_{k=1}^{K} k [\mathbb{P} \left( \mu \ge n, Z_{n} = k \right) + o(1)] \overline{F} (x + g(n)) \\ 
			&= (1 + o(1)) \sum_{n=1}^{N} \mathbb{E} \left[ Z_n \mathbb{I} \left( \mu \ge n \right) \left(Z_{n} \le K \right) \right] \overline{F} (x + g(n)) + o(\overline{F} (x + g(1))) \\
			& = (1 + o(1)) \sum_{n=1}^{N} \mathbb{E} \left[ Z_{n} \mathbb{I} \left( \mu \ge n \right) \mathbb{I} \left( Z_{n} \le K \right) \right] \overline{F} (x + g(n)).
		\end{align*}
		
		Further, let $ \varepsilon > 0 $. Since
		\begin{align*}
			\mathbb{E} \left[Z_{n}\mathbb{I} \left(\mu \ge n\right) \mathbb{I} \left( Z_{n} > K \right) \right] \le \mathbb{E} \left[ Z_{n} \mathbb{I} \left( Z_{n} > K \right) \right],
		\end{align*}
	we can choose a sufficiently large $ K $
		such that
		\begin{align*}
			\mathbb{E} \left[Z_{n} \mathbb{I} \left( \mu \ge n \right) \mathbb{I} \left( Z_{n} \le K \right) \right] \ge \mathbb{E} \left[ Z_{n} \mathbb{I} \left( \mu \ge n \right) \right] - \varepsilon,
		\end{align*}
		for all $ n \le N $. Therefore, 
		\begin{align}
			\sum_{n=1}^{N}a_{\mu,1}^{g} (x,n) &\ge (1+o(1)) H_{\mu}^{g} (x; \widehat{\mathcal{P}}) -(1+o(1)) \varepsilon \sum_{n=1}^{N} \overline{F} (x+g(n)) \nonumber \\
			& \ge (1+o(1)) H_{\mu}^{g} (x; \widehat{\mathcal{P}}) - (1+ o(1))N \cdot \varepsilon H_{\mu}^{g} (x; \widehat{\mathcal{P}}) \nonumber \\
			& = (1+o(1)) (1 - \varepsilon N) H_{\mu}^{g}(x; \widehat{\mathcal{P}}). \label{th3::lb3}
		\end{align}
		
		Since $\varepsilon >0$ is arbitrary, it follows from \eqref{th3::lb3} that
		\begin{align} \label{th3::lb4}
			\mathbb{P} \left( R_{\mu}^{g} > x \right) \ge \sum_{n=1}^{N} a_{\mu,1}^{g}(x,n) \ge (1+o(1)) H_{\mu}^{g}(x; \widehat{\mathcal{P}}),
		\end{align}
		uniformly in all $ \mu \in \mathcal{F}_{N}(\mathcal{T}) $ and $ g \in \mathcal{G}_{0} $.
		
		Now we prove the second statement of the theorem. For that, we show that the first summand in  \eqref{th3::decomp} has the right asymptotics while the second and the third summands are negligible.
		
		Clearly,
		\begin{align*}
			a_{\mu,1}^{g}(x,n) & \le \sum_{k=1}^{\infty} \mathbb{P} \left( \mu \ge n, \max_{1\le j \le k} \xi_{n,j} > x +g(n) -h(x+g(n-1)), Z_n = k \right) \\
			& \le \sum_{k=1}^{\infty} k \mathbb{P} \left( \mu \ge n, Z_n = k \right) \overline{F} (x +g(n) - h(x + g(n-1))),
		\end{align*}
		where the second inequality holds since $ \mu $ does not depend on the future. Therefore, by
		 \eqref{th3::kr},
		\begin{align*}
			a_{\mu,1}^{g}(x,n) \le (1+o(1)) \mathbb{E} \left[ Z_n \mathbb{I} (\mu \ge n) \right] \overline{F} (x+g(n))
			& \le (1+o(1)) \mathbb{E} \left[ Z_n \mathbb{I} (\mu \ge n) \right] \overline{F} (x+g(n))
		\end{align*}
		and
		\begin{align} \label{th3::ub1}
			\sum_{n=1}^{N}a_{\mu,1}^{g}(x,n) \le (1+o(1)) H_{\mu}^{g}(x; \widehat{\mathcal{P}}),
		\end{align}
		where the summand $ o(1) $ is small uniformly in all $ g \in \mathcal{G}_{0} $ and $ \mu \in \mathcal{F}_{N} (\mathcal{T}) $. 
		
		Taking into account \eqref{th3::lb4} and \eqref{th3::ub1}, we get that 
		\begin{align} \label{th3::ub2}
			\sum_{n=1}^{N}a_{\mu,1}^{g} (x) = (1 + o(1)) H_{\mu}^{g} (x; \widehat{\mathcal{P}}),
		\end{align}
		uniformly in all $ \mu \in \mathcal{F}_{N}(\mathcal{T}) $ and $ g \in \mathcal{G}_{0} $.
		
		Further,
		\begin{align*}
			a_{\mu,2}^{g}(x, n) \le \sum_{k=1}^{\infty} \mathbb{P} \left( \mu \ge n, Z_{n}=k, l_{n} \le -h(x+g(n-1)) \right) \overline{F}(x+g(n) - h(x+g(n-1))),
		\end{align*}
		since $ \mu $ does not depend on the future. Then, by \eqref{th3::kr},
		\begin{align*}
			a_{\mu,2}^{g}(x,n) &\le (1+o(1)) \mathbb{E} \left[ Z_{n} \mathbb{I} \left( \mu \ge n \right) \mathbb{I} \left(l_{n}\le-h(x+g(n-1)) \right) \right] \overline{F}(x+g(n)) \\
			&\le (1+o(1)) \mathbb{E} \left[ Z_{n} \mathbb{I} \left( l_{n}\le-h(x) \right) \right] H_{\mu}^{g}(x; \widehat{\mathcal{P}}) \\
			& = o(H_{\mu}^{g}(x; \widehat{\mathcal{P}}))
		\end{align*}
		and, therefore,
		\begin{align} \label{th3::ub3}
			\sum_{n=1}^{N} a_{\mu,2}^{g}(x,n) = o(H_{\mu}^{g}(x; \widehat{\mathcal{P}})),
		\end{align}
		uniformly in all $ \mu \in \mathcal{F}_{N}(\mathcal{T}) $ and $ g \in \mathcal{G}_{0} $.
		
		Now consider the third summand in \eqref{th3::decomp}. 
		It follows from Theorem \ref{MR::th2} (for $ \mu = N $ a.s.) that the following equivalence holds: 
		\begin{align*}
			\mathbb{P} \left( R_{N}^{\widehat{0}} > x \right) \sim \mathbb{E} \eta_{N} \overline{F} (x).
		\end{align*}
		Since $ H_{N}^{0} (x; \widehat{\mathcal{P}}) = \mathbb{E} \eta_{N} \overline{F} (x) $, it follows from \eqref{th3::ub2} that 
		\begin{align*}
			\sum_{n=1}^{N} a_{N,3}^{0}(x,n) = o (H_{N}^{0}(x; \widehat{\mathcal{P}})).
		\end{align*}
		
		Next, for any $ \mu \in \mathcal{F}_{N}(\mathcal{T}) $ and $ g \in \mathcal{G}_{0} $,  
		\begin{align*}
			a_{\mu,3}^{g}(x,n) \le a_{N,3}^{0}(x+g(n-1),n),
		\end{align*}
		and since $ a_{\mu,3}^{g}(x,1) = 0 $, we get that
		\begin{align} \label{th3::ub4}
			\sum_{n=1}^{N} a_{\mu,3}^{g}(x,n) \le \sum_{n=2}^{N} a_{N,3}^{0} (x+g(n-1),n) = o(H_{\mu}^{g}(x; \widehat{\mathcal{P}}))
		\end{align}
		uniformly in all $ \mu \in \mathcal{F}_{N}(\mathcal{T}) $ and $ g \in \mathcal{G}_{0} $.
		
		Finally, \eqref{th3::ub2}, \eqref{th3::ub3} and \eqref{th3::ub4} imply the second statement of Theorem \ref{MR::th3}.
	\end{proof}
	
	\section*{Appendix} \label{app}
	
	\subsection*{A1. Proofs of Propositions \ref{BPVE::nu_moments} and \ref{BPVE::Z_moments}}
	For any $ n \ge 0 $, we let 
	\begin{align}
		d_{n} = - \sum_{k=n}^{\infty} \ln (1 - q_{k}).
	\end{align}
	
	\begin{lemma} \label{A1::nu_tail}
		Assume that \eqref{BPVE::fad_cond} holds. Then 
		\begin{align*}
			\mathbb{P}\left( \nu \le n \right) \in [ e^{-L d_{n}}, e^{-d_{n}} ].
		\end{align*}
	\end{lemma}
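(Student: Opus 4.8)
The plan is to express the event $\{\nu \le n\}$ in terms of the branching variables and compute (or bound) its probability directly. Recall that $\nu = \inf\{n \ge 1 : Z_n = Z_{n+1} = \cdots = Z\}$. Under \eqref{BPVE::non_ext} every individual has at least one child, so the population sizes $Z_0 \le Z_1 \le Z_2 \le \cdots$ are nondecreasing and the process stabilises precisely when no individual alive in generation $n$ or later produces more than one child. Thus, conditionally on $Z_k = m_k$, the event $\{\nu \le n\}$ is the event that none of the $m_k$ individuals in generation $k$ reproduces ``nontrivially'' for every $k \ge n$; a single individual in generation $k$ reproduces trivially with probability $1 - q_k$ (i.e. has exactly one child, by definition of $q_k$), so the conditional probability of $\{\nu \le n\}$ given the trajectory up to generation $n$ with $Z_n = m$ equals $\prod_{k=n}^{\infty}(1-q_k)^{m}$ (using the independence of the $\widehat\zeta_k$ across generations and within a generation). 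Note $\prod_{k=n}^\infty (1-q_k)^m = e^{-m d_n}$ with $d_n$ as defined, and this product is positive and well-defined because \eqref{BPVE::fad_cond} forces $\sum_k q_k < \infty$ via \eqref{BPVE::q_n_conv}.

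First I would make precise this conditioning argument: write
\begin{align*}
	\mathbb{P}\left( \nu \le n \right) = \mathbb{E}\left[ \mathbb{P}\left( \nu \le n \mid Z_n \right) \right] = \mathbb{E}\left[ e^{-Z_n d_n} \right],
\end{align*}
justifying the inner identity by the Markov property of the branching process in varying environment and the i.i.d. structure of offspring within generation $k$. Then, since $Z_n \ge 1$ a.s. and $e^{-t d_n}$ is decreasing in $t$, the upper bound $\mathbb{P}(\nu \le n) \le e^{-d_n}$ is immediate. For the lower bound, I would use Jensen's inequality (convexity of $t \mapsto e^{-t d_n}$) to get $\mathbb{E}[e^{-Z_n d_n}] \ge e^{-d_n \mathbb{E} Z_n}$, and then bound $\mathbb{E} Z_n = L_n = \prod_{k=0}^{n-1}\mathbb{E}\zeta_{k,1} \le \prod_{k=0}^{\infty}\mathbb{E}\zeta_{k,1} = L$ using \eqref{BPVE::fad_cond}, yielding $\mathbb{P}(\nu \le n) \ge e^{-L d_n}$. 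This gives exactly the claimed interval $[e^{-Ld_n}, e^{-d_n}]$.

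The main obstacle, and the step deserving the most care, is the rigorous justification of the identity $\mathbb{P}(\nu \le n \mid Z_n) = e^{-Z_n d_n}$ — i.e. showing that $\{\nu \le n\}$ conditionally on $Z_n = m$ really is the event that all $m$ subtrees rooted at generation-$n$ individuals are ``trivial'' forever, and that these subtrees are independent with the stated per-generation stabilisation probabilities $\prod_{k \ge n}(1-q_k)$. One has to observe that $\{\nu \le n\}$ decomposes as a countable intersection over $\ell \ge n$ of the events $\{Z_{\ell+1} = Z_\ell\}$, each of which, given $Z_\ell = m_\ell$, has conditional probability $(1-q_\ell)^{m_\ell}$; and that on $\{\nu \le n\}$ one has $Z_\ell = Z_n$ for all $\ell \ge n$, so the telescoping product collapses to $\prod_{\ell=n}^{\infty}(1-q_\ell)^{Z_n}$. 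A short monotone-convergence / continuity-from-above argument handles passing from finite intersections $\bigcap_{\ell=n}^{M}\{Z_{\ell+1}=Z_\ell\}$ to the infinite one. Everything else — the two inequalities — is routine once this identity is in hand.
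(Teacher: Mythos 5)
Your proposal is correct and follows essentially the same route as the paper's own proof: both identify $\mathbb{P}(\nu\le n)=\mathbb{E}\,e^{-d_nZ_n}$ by conditioning on $Z_n$ and using the independence of the offspring variables across and within generations, then obtain the upper bound from $Z_n\ge 1$ a.s. and the lower bound from Jensen's inequality together with $\mathbb{E}Z_n\le L$. Your extra care in justifying the conditional identity via the telescoping intersection $\bigcap_{\ell\ge n}\{Z_{\ell+1}=Z_\ell\}$ and continuity from above is a welcome refinement of a step the paper treats as immediate, but it is not a different argument.
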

	\begin{proof}[Proof]
		Since
		\begin{align*}
			\mathbb{P}\left( \nu \le n \right) & = \mathbb{P}\left( Z_{k} = Z_{n} \text{ for all } k \ge n \right) \nonumber \\ 
			&= \mathbb{P} \left(\zeta_{k,j} = 1 \text{ for all } j = 1, \ldots, Z_{n} \text{ and } k \ge n \right) \nonumber \\
			& = \mathbb{E} \left[ \mathbb{P} \left(\zeta_{k,j} = 1 \text{ for all } j = 1, \ldots, Z_{n} \text{ and } k \ge n | Z_{n} \right) \right] \nonumber \\ 
			&= \mathbb{E} \left[ \prod_{k=n}^{\infty} (1-q_{k})^{Z_{n}} \right] \nonumber \\
			& = \mathbb{E} e^{-d_{n} Z_{n}}
		\end{align*}
		and $ Z_{n} \ge 1 $ a.s., we get 
		\begin{align*}
			\mathbb{P} \left( \nu \le n \right) \le e^{-d_{n}}.
		\end{align*}
			
		On the other hand, by the Jensen's inequality, 
		\begin{align*}
			\mathbb{P} \left( \nu \le n \right) \ge e^{-d_{n} \mathbb{E}Z_{n}}.
		\end{align*}
		Then \eqref{BPVE::fad_cond} implies that $ \mathbb{E} Z_{n} \le L < \infty $ and, therefore,
		\begin{align*}
			\mathbb{P}\left( \nu \le n \right) \ge e^{-L d_{n}}.
		\end{align*}
		
		Lemma \ref{A1::nu_tail} is proved.
	\end{proof}
	
	\begin{proof}[Proof of Proposition \ref{BPVE::nu_moments}]
		Without loss of generality, we may assume that  $ f(0) = f(0+0) = 0 $. Then  
		\begin{align} \label{nu_moments::temp1}
			\mathbb{E} f(\nu) = \int_{0}^{\infty} \mathbb{P} \left( \nu \ge t \right) df(t) = \sum_{n=0}^{\infty} \mathbb{P} \left( \nu > n\right) \cdot (f(n+1) - f(n)).
		\end{align}
		
		By Lemma \ref{A1::nu_tail}, the series in the RHS of   \eqref{nu_moments::temp1}
		converges if and only if
		\begin{align*}
			\sum_{n=0}^{\infty} (f(n+1) - f(n)) d_{n} <\infty.
		\end{align*}
		It follows from the equivalence $ d_n \sim \sum_{k=n}^{\infty} q_k $ that $ \mathbb{E} f(\nu) < \infty $ if and only if
		\begin{align*}
			\sum_{n=0}^{\infty} (f(n+1) - f(n)) \sum_{k=n}^{\infty} q_{k} = \sum_{k=0}^{\infty} \left( \sum_{n=0}^{k} (f(n+1) - f(n)) \right) q_k = \sum_{k=0}^{\infty} f(k+1) \cdot q_{k} < \infty.
		\end{align*}
		Proposition \ref{BPVE::nu_moments} is proved.
	\end{proof}
	
	\begin{proof}[Proof of Proposition \ref{BPVE::Z_moments}]
		We use the following well-known inequality:
		\begin{align*}
			\left| \sum_{k=1}^{n} x_{k} \right|^{r} \le \max \{1, n^{r-1} \} \sum_{k=1}^{n} |x_{k}|^{r} \text{ for all } r > 0 \text{ and for all } x_{1}, \ldots, x_{n} \in \mathbb{R}. 
		\end{align*}
		
		Applying this inequality to $ Z_{n}^{s} $, we get: 
		\begin{align*}
			\mathbb{E} (Z_{n})^s & \le \mathbb{E} \left[Z_{n-1}^{s-1} \sum_{j=1}^{Z_{n-1}} \zeta_{n-1,j}^{s} \right] = \mathbb{E} Z_{n-1}^{s} \mathbb{E} \zeta_{n-1,1}^s \le \prod_{k=0}^{n-1} \mathbb{E} \zeta_{k,1}^{s}.
		\end{align*}
		By the Fatou's Lemma,
		\begin{align*}
			\mathbb{E} Z^{s} \le \prod_{n=0}^{\infty} \mathbb{E} \zeta_{n,1}^{s} < \infty.
		\end{align*}
		Proposition \ref{BPVE::Z_moments} is proved.
	\end{proof}

	\subsection*{A2. Time-homogeneous BRW with heavy-tailed distribution of increments}
	
	The statement presented below (which does not pretend to be original) shows that the supremum over infinite time interval of a time-homogeneous BRW is always infinite, if one assumes that condition  
	\eqref{BPVE::non_ext} holds and that the common distribution of jumps has a heavy right tail. This fact shows the necessity of the condition of fading branching for the study of the supremum over infinite time horizon of BRW's with heavy-tailed jump distributions. 

	\begin{proposition}\label{infinity}
		Assume that \eqref{BPVE::non_ext} holds, that $\zeta_n$, $n=1,2,\ldots$ have a common distribution $G$, and that 
		$q\equiv q_n = {\mathbb P}(\zeta_{1,1}>1)>0$. Assume also that the distribution $F$ of the increments of the BRW has a heavy right tail. Then, for any $c \in \mathbb{R}$, 
		\begin{align*}
			R^{\widehat{c}} \equiv \sup_n R_n^{\widehat{c}}=\infty \quad \mbox{a.s.}
		\end{align*} 
	\end{proposition}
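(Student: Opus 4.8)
The plan is to fix a threshold $T>0$ and prove $\mathbb{P}(R^{\widehat{c}}>T)=1$; since $\{R^{\widehat{c}}=\infty\}=\bigcap_{T\in\mathbb{N}}\{R^{\widehat{c}}>T\}$, intersecting over $T$ then finishes. As $R^{\widehat{c}}$ is non-increasing in $c$, I may assume $c>0$; the case $c\le 0$ follows from $R^{\widehat{c}}\ge R^{\widehat{c'}}$ with any fixed $c'>0$.

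First I would record two ingredients. (a) A deterministic geometric lower bound on the population: since $\zeta_{n,1}\ge 1$ a.s.\ and $\mathbb{P}(\zeta_{1,1}\ge 2)=q>0$, the coupling $\widehat{\zeta}_{n,j}:=\min(\zeta_{n,j},2)$ shows that $Z_n$ dominates pathwise a Galton--Watson process $\widehat{Z}_n$ with offspring law on $\{1,2\}$ of mean $1+q$; this process never dies and, having bounded offspring, obeys the Kesten--Stigum theorem, so $\widehat{Z}_n/(1+q)^n\to\widehat{W}$ a.s.\ with $\widehat{W}>0$ a.s. Writing $\varepsilon:=\tfrac12\log(1+q)$, we get $Z_n\ge\widehat{Z}_n\ge e^{\varepsilon n}$ for all large $n$, a.s. (b) The quantitative content of heaviness: $F\in\mathcal{H}$ forces $\overline{F}(x)>0$ for all $x$ and $\limsup_{x\to\infty}e^{\lambda x}\overline{F}(x)=\infty$ for every $\lambda>0$; taking $\lambda=\varepsilon/(2(c+2))$, points $x_k\uparrow\infty$ with $\overline{F}(x_k)\ge e^{-\lambda x_k}$, and $n_k:=\lfloor x_k/(c+2)\rfloor$, I obtain a deterministic sequence $n_k\uparrow\infty$ along which $e^{\varepsilon n_k}\,\overline{F}\big((c+2)n_k\big)\to\infty$.

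The heart of the argument is to force a record at the levels $n_k$ by pairing a ``typical'' ancestor with a single ``big jump'' among its children. Here the main obstacle is the left tail of $F$: the leftmost particle in generation $n$ may drift to $-\infty$ faster than linearly, so one cannot simply lower-bound the position of the parent of a record jump by a linear function. The way around this is that only an asymptotically negligible \emph{fraction} of particles lies far from the origin: using $\mathbb{E}\xi=0$ and the weak law, $\mathbb{P}(S_m\le -m)\to 0$, so the expected number of generation-$m$ particles of the dominating tree below $-m$ equals $(\mathbb{E}\widehat{Z}_m)\,\mathbb{P}(S_m\le -m)=o(\mathbb{E}\widehat{Z}_m)$; combining Markov's inequality with $\widehat{Z}_m\ge\tfrac12\widehat{W}(1+q)^m$ (valid for large $m$) yields, with probability tending to $1$, at least $e^{\varepsilon(n_k-1)}$ particles $\pi$ with $|\pi|=n_k-1$ and $S(\pi)>-(n_k-1)$. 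Conditionally on this event, the generation-$n_k$ edges descending from those particles carry at least $e^{\varepsilon(n_k-1)}$ i.i.d.\ increments with distribution $F$, independent of everything used so far, so the probability that none of them exceeds $(c+1)n_k+T$ is at most $\exp\!\big(-e^{\varepsilon(n_k-1)}\overline{F}((c+1)n_k+T)\big)$, which tends to $0$ along $(n_k)$ by the choice of the subsequence. On the intersection of the two events some generation-$n_k$ vertex sits above $-(n_k-1)+(c+1)n_k+T>cn_k+T$, hence $R^{\widehat{c}}_{n_k}>T$; letting $k\to\infty$ forces $\mathbb{P}(R^{\widehat{c}}>T)\ge\limsup_k\mathbb{P}(\text{the intersection above})=1$, and intersecting over $T$ gives $R^{\widehat{c}}=\infty$ a.s.

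I expect the only real work to be technical: establishing the ``vanishing fraction far from the origin'' bound with enough uniformity (this is precisely where passing to the bounded-offspring dominating tree is needed, to avoid an $X\log X$-type hypothesis on $Z_n$ itself), turning the several ``with probability tending to $1$'' statements into a single event, and the independence bookkeeping among the genealogical tree, the increments at levels $\le n_k-1$, and the fresh increments at level $n_k$.
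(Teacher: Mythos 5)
Your proposal is correct, but it resolves the one genuine obstacle --- the uncontrolled left tail of $F$, which prevents a naive linear lower bound on the position of the parent of the prospective big jump --- by a different mechanism than the paper. The paper \emph{thins the genealogical tree}: it deletes every edge $e$ with $\xi_e<-C$, observes that for $C$ large the thinned process is still supercritical (mean offspring $(1+q)(1-\gamma)$ with $\gamma=\mathbb{P}(\xi<-C)$) with small extinction probability, and gains the deterministic bound that every surviving particle in generation $n$ sits above $-Cn$; it then defines independent events $A_n$ (one of the $\delta E^n/2$ fresh increments exceeds $2n(C+c)$) and invokes $\limsup_n\mathbb{P}(A_n)=1$ together with Borel--Cantelli/the $0$--$1$ law to get a big jump infinitely often a.s. You instead keep the full tree (after the same reduction to offspring in $\{1,2\}$), and control the left tail softly: a many-to-one first-moment computation plus the weak law shows that the particles below $-m$ in generation $m$ form a vanishing fraction of the population, so Kesten--Stigum still leaves exponentially many ``good'' parents; you then run the big-jump estimate only along a deterministic subsequence $n_k$ supplied by $\limsup_x e^{\lambda x}\overline{F}(x)=\infty$, and conclude via $\mathbb{P}(R^{\widehat{c}}>T)\ge\limsup_k\mathbb{P}(\cdot)=1$ rather than via an infinitely-often statement. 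The paper's thinning buys a cleaner deterministic localisation of all particles and genuinely independent events $A_n$; your route avoids modifying the offspring law as a function of the increments and needs no independence across generations, at the price of the Markov/WLLN bookkeeping and the juggling of several ``with probability $1-\varepsilon$'' events that you correctly flag as the remaining technical work. All the individual steps you sketch (the $\min(\zeta,2)$ coupling, the choice $\lambda=\varepsilon/(2(c+2))$ making $e^{\varepsilon n_k}\overline{F}((c+2)n_k)\to\infty$, the conditional independence of the level-$n_k$ increments from the selection of good parents, and the reduction to $c>0$ by monotonicity) check out.
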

	
	\begin{remark}
	The statement of Proposition \ref{infinity} stays valid in a more general case where condition  \eqref{BPVE::non_ext} is replaced the the assumption of supercriticality 
	 (i.e. $ \mathbb{E} \zeta_{1,1} > 1 $) of the branching process.
	\end{remark}
	
	\begin{proof}[Proof]
		Without loss of generality, we may assume that $ \zeta_{1,1} $ takes two values only,  $ 1$ and $ 2 $  (if this is not the case, we may consider a ``minorising'' branching mechanism where $ \zeta^{\prime}_{n,i} = \min (\zeta_{n,i}, 2) $ and prove the statement for it.)
		
		For any $ C > 0 $, we let $ \gamma = \mathbb{P} \left( \xi < -C \right) $. Consider an auxiliary ``thinned'' branching process defined as follows: every edge $ e $ in the genealogical tree is deleted if  $ \xi_{e} < -C $. Then, in the thinned branching process, the number of the offspring of any vertex may take only three values:  $2$ with probability $ q(1-\gamma)^2 $, $ 1 $ with probability $ (1-q)(1-\gamma) + 2q \gamma (1-\gamma) $ and $ 0 $ with the remaining probability. Therefore, the mean number of the offspring in the thinned process,  $ E = 2q(1-\gamma)^2+2q\gamma(1-\gamma)+(1-q)(1-\gamma) =
		(1+q)(1-\gamma) $, may be made as close to $ 1+q>1 $ by taking $ C $ sufficiently large. Further, the extinction probability of the thinned process tends to  $ 0 $ as $ C \to \infty $.
		
		Let us assume that $E>1$. Then the thinned branching process $\widetilde{Z}_n$ is supercritical and  $\widetilde{Z_n}/E^n$ converges a.s. to a proper r.v., say 
		$W$, where the probability ${\mathbb P} (W=0)$ may be made as small as one wishes by taking  $ C $ sufficiently large. Therefore, for any sufficiently small $\varepsilon>0$ and $\delta>0$, one can choose $C$ and $ N $ such that
		$E>1$ and ${\mathbb P}(W>\delta) > 1-\varepsilon$, and 
		\begin{align*}
			{\mathbb P} (W>\delta, \widetilde{Z}_n\ge \delta E^n/2, \ \mbox{for all} \ n\ge N) \ge 1-2\varepsilon.
		\end{align*}
		
		Consider now the BRW that corresponds to the branching process  $\widetilde{Z}_n$. Its
		increments $\widetilde{\xi}_{n,i,j}$ form a family of i.i.d. r.v.'s with common distribution 
		\[
		{\mathbb P} (\widetilde{\xi} \le x) = {\mathbb P}(\xi \le x \ | \ \xi >-C) =
		\frac{{\mathbb P} (-C<\xi\le x)}{{\mathbb P} (\xi >C)}, \quad  \mbox{for} \ x>C.
		\]
		Consider the following family of events: for $n=1,2, \ldots$,
		\begin{align*}
			A_n = \cup_{j=1}^{\delta E^n/2} \{\widetilde{\xi}_{n,j} > 2n(C+c)\}.
		\end{align*}
		These events are mutually independent and, by the $0-1$ law, occur infinitely often a.s. since for $K=2(C+c)$ and $T=\delta/2$, 
		\begin{align*}
			\limsup_{n\to\infty}{\mathbb P} (A_n) &= 
			1 - \liminf_{n\to\infty}(1-\overline{F}(Kn)/\overline{F}(C))^{TE^n}\\
			&= 1- \exp \left( -\limsup_{n\to\infty} \overline{F}(Kn)){TE^n}/\overline{F}(C) \right)\\
			&= 1,
		\end{align*} 
		by Theorem 2.6 from \cite{FossKorZach2013}.
		
		Let $\widetilde{r}_n$ be the rightmost point in the $n$'th generation of the auxiliary BRW that corresponds to the branching process $\widetilde{Z}_n, n\ge 1$ and has increments with the common distribution function 
		${\mathbb P} (\xi \le x+c \ | \ \xi > -C)$, and let
		$\widetilde{R} = \sup_n r_n$ (where $\widetilde{r}_n=-\infty$ if $\widetilde{Z}_n=0$). 
		Then, for any  $x$ and $\varepsilon >0$,
		\begin{align*}
			{\mathbb P} (R^{(c)}>x) &\ge {\mathbb P} (\widetilde{R}>x)\\
			&\ge {\mathbb P} \left(\{ \cup_{n\ge \max (N,x/(C+c))} A_n\}\cap \{W>\delta, \widetilde{Z}_n\ge \delta E^n/2 \ \mbox{for all} \ n\ge N\} \right) \\
			&\ge 1-2\varepsilon.
		\end{align*}
		Letting $ \varepsilon $ tend to $ 0 $, we obtain the desired result.
		
	\end{proof}
	
	The authors thank Vladimir Vatutin and Bastien Mallein for useful comments, and the anonymous referee for constructive remarks and bringing our attention to references \cite{Durr1983} and \cite{Gantert2000}.

\end{document}